\preto\align{\par\nobreak\small\noindent}
\newcommand{\C}{\mathbb{C}}						% Complex numbers
\newcommand{\R}{\mathbb{R}}						% Real numbers
\newcommand{\Hinfty}{H^\infty}														% H infty functions
\newcommand{\stable}{A}	
\newcommand{\pstable}{\mathbf{P}}													% Domain of P-stable functions
\newcommand{\fractions}[1]{Q(#1)}	
\newcommand{\error}{e}													% An error signal
\newcommand{\measurement}{y}										% A measurement signal
\newcommand{\reference}{\measurement_r}												% A reference signal
\newcommand{\disturbance}{d}										% A disturbance signal
\newcommand{\Generator}{\Theta}											% Signal generator
\newcommand{\generator}{\theta}											% A simple signal generator
\newcommand{\Plant}{p}		
\newcommand{\Controller}{c}													% Controller
\newcommand{\Closedloop}[1]{H( #1 )}			% Closed loop transfer function
\newtheorem{theorem}{Theorem}[section]
\newtheorem{lemma}[theorem]{Lemma}
\newtheorem{corollary}[theorem]{Corollary}
\newtheorem{remark}[theorem]{Remark}
\theoremstyle{definition}
\newtheorem{definition}[theorem]{Definition}
\newtheorem{example}[theorem]{Example}
\newcommand*{\QEDexample}{\hfill\ensuremath{\blacksquare}}%
\begin{document}

\pagestyle{empty} %No headings for the first pages.

%% Title Page %%%%%%%%%%%%%%%%%%%%%%%%%%%%%%%%%%%%%%%%%%%%%%%
%% ==> Write your text here or include other files.

%% The simple version:
\title{
%General Factorization 
A fractional representation approach to the robust regulation problem \\  for SISO systems}
\thispagestyle{plain}
\author{P. Laakkonen$^\dagger$ and A. Quadrat$^*$}
\address{$^\dagger$ Laboratory of Mathematics, Tampere University of Technology, PO.\ Box 553, 33101 Tampere, Finland}
\address{$^*$ Inria Lille - Nord Europe, Non-A project, Parc Scientifique de la Haute Borne, 40 Avenue Halley, Bat. A - Park Plaza, 59650 Villeneuve d'Ascq, France.}

% As a general rule, do not put math, special symbols or citations
% in the abstract or keywords.
\begin{abstract}
The purpose of this article is to develop a new approach to the robust regulation problem for plants which do not necessarily admit coprime factorizations. The approach is purely algebraic and allows us dealing with a very general class of systems in a unique simple framework. We formulate the famous internal model principle in a form suitable for plants defined by fractional representations which are not necessarily coprime factorizations. By using the internal model principle, we are able to give necessary and sufficient solvability conditions for the robust regulation problem and to parameterize all robustly regulating controllers.
\end{abstract}

% Note that keywords are not normally used for peerreview papers.
\keywords{
Robust regulation, fractional representation approach, linear systems.% \sep infinite-dimensional systems
%factorization 
}

\maketitle

%%%%%%%%%%%%%%%%%%%%%%%%%%%%%%%%%%%%%%%%%%%%%%%%%%%%%%%%%%%%%
%% ==> Introduction and problem formulation (and organization of the article):
\section{Introduction}

Robustness of controllers is of fundamental importance since it allows them to work under uncertain conditions. Regulating controllers can asymptotically track a given reference signal. Robustness means that the controller remains regulating despite small perturbations of the system. For example, modeling errors, model simplifications and attrition of components in a real world application can be seen as perturbations of the system. The robust regulation problem is to find a robustly regulating controller.

Robust regulation of finite-dimensional plants is well-understood \cite{Davison1976a,FrancisWonham1975a,Vidyasagar}. The finite-dimensional theory has been generalized to infinite-dimensional plants and signals by several authors. See, for instance, \cite{CallierDesoer1980, HamalainenPohjolainen2000, HamalainenPohjolainen2010, Haraetal1988, ImmonenPhd, LaakkonenPhd, NettThesis, PaunonenPohjolainen2012, Ylinenetal2006} and the references therein. One of the most fundamental results of robust regulation is the internal model principle, which states that any robustly regulating controller contains a suitably reduplicated model of the dynamics to be tracked.

In the frequency domain, the robust regulation problem is an algebraic problem. Vidyasagar formulated and solved it by using coprime factorizations over the ring of stable rational transfer functions \cite{Vidyasagar}. Vidyasagar's results state the internal model principle, give a necessary and sufficient solvability condition of the problem, and parameterize all robustly regulating controllers in a remarkably simple form. These results have been generalized to fields of fractions over rings suitable for distributed parameter systems and/or infinite-dimensional reference and disturbance signals \cite{CallierDesoer1980, HamalainenPohjolainen2000, Haraetal1988, LaakkonenPhd, NettThesis, Ylinenetal2006}. The common feature of the results is that they require the existence of coprime factorizations. This is problematic since all plants do not possess coprime factorizations \cite{Anantharam1985, Mori1999}, or their existence is not known \cite{LaakkonenPhd, Mori2002}.

In this paper, we develop robust regulation theory of single-input single-output (SISO) plants based on stabilizability results of \cite{Quadrat2003}. The advantage of the theory presented in \cite{Quadrat2003} is that it uses no coprime factorizations and allows us to develop theory with very few assumptions. We only need to define a commutative ring $\stable$ of stable elements with a unit and having no zero divisors to start with. The plants are just elements in the field of fractions over $\stable$. This makes the theory applicable in several different classes of infinite-dimensional systems, for instance in those of \cite{LaakkonenPhd, Logemann1993}. From the theoretic point of view, the choice of $\stable$ is irrelevant, but when applying the results, the choice of $\stable$ depends naturally on the problem at hand. Examples of rings motivated by systems theoretic applications involve $\Hinfty$ and the Callier-Desoer algebra where all stabilizable plants have coprime factorizations, $\stable:=\R[x^2,x^3]$ of Example \ref{exa:::Example1} with plants without weakly coprime factorizations, and $\pstable$ of \cite{LaakkonenPhd}, for which the existence of (weakly) coprime factorizations of stabilizable transfer functions is not known.

The abstract algebraic approach to robust regulation has received only little attention this far. In the last chapter of his book \cite{Vidyasagar}, Vidyasagar discussed the generalization of finite-dimensional stabilization and regulation theory to infinite-dimensional systems. Unfortunately, the part concerning robust regulation uses coprime factorizations and therefore is not applicable for general rings. The same is true for the theory developed in \cite{NettThesis}. In addition, both of the above references use topological notions in the study of robustness. It is possible to do without by defining the robustly regulating controllers so that they are exactly the ones that are regulating for every plant they stabilize. This definition splits the robust regulation problem into two parts: robust regulation that involves constructing an internal model into the controller and robust stabilization that involves the topological aspects of the problem. In this article, we focus on the former. Robustness of stability is well-understood in many physically interesting algebraic structures \cite{CurtainZwart, Vidyasagar} as well as in the abstract setting \cite{Quadrat2015,Vidyasagaretal1982}.

By using the fractional representation approach, we generalize the theory of \cite{Vidyasagar} to the plants which do not necessarily possess coprime factorizations. The main contributions of this article are:
\begin{itemize}
\item We give a reformulation of the internal model principle without using coprime factorizations.
\item We give a checkable necessary and sufficient condition for solvability of the robust regulation problem.
\item We parameterize all robustly regulating controllers for signal generators with a weakly coprime factorization.
\end{itemize}
The internal model principle and the solvability condition can be found in the preliminary version \cite{LaakkonenQuadrat2015} of this article. However, in this article, we require only weakly coprime factorizations instead of coprime factorizations, which extends some of the results of \cite{LaakkonenQuadrat2015}. 
Theorem \ref{thm:::CoprimeGeneratorSolvability} and Corollary \ref{cor:::AllRobustlyRegulatingControllers}, which give a parametrization of all robustly regulating controllers, are new. We formulate the results of this paper using fractional representations. For fractional ideal approach, see \cite{LaakkonenQuadrat2015}.

The remaining part of the paper is organized as follows. Notations, preliminary results, and the problem formulation are given in Section \ref{sec:::Preliminaries}. The internal model principle is considered in Section \ref{sec:::IMP}. Section \ref{sec:::Solvability} contains solvability considerations and, by using the results of the section, we are able to give a parametrization of all robustly regulating controllers. In Section \ref{sec:::Examples}, we illustrate the theoretical results  by examples. Finally, the concluding remarks are made in Section \ref{sec:::Conclusions}.

%% <== End of intro
%%%%%%%%%%%%%%%%%%%%%%%%%%%%%%%%%%%%%%%%%%%%%%%%%%%%%%%%%%%%%

\section{
%Notations, preliminary results and 
The problem formulation}\label{sec:::Preliminaries}

Let $A$ be an {\em integral domain}, namely a commutative ring with a unit element 1 and without zero divisors \cite{Rotman}. We denote by $\stable^{l \times m}$ the $\stable$-module of $l \times m$ matrices with entries in $\stable$ and by 
\begin{align*}
\fractions{\stable}:=\left \{ \frac{n}{d} \; | \; 0 \neq d, \, n \in \stable \right \}
\end{align*}
{\em the field of fractions} of $\stable$. 

%We denote by $\matrices{S}$ the set of all matrices with entries in a set $S$. 
%We choose the set of stable elements to be an {\em integral domain} $\stable$ that has a unit element 1 \cite{Rotman}. The {\em field of fractions} of $\stable$ is denoted by $\fractions{\stable}$, i.e., $\fractions{\stable}:= \left \{ \frac{d}{n} \; | \; 0 \neq n, \, d \in \stable \right \}$.

\begin{definition}
\begin{enumerate}
\item An element  $h \in \fractions{\stable}$ (resp., a matrix $H\in \fractions{\stable}^{l \times m}$) is said to be \emph{stable} if we have $h \in \stable$ (resp., $H \in \stable^{l \times m}$) and \emph{unstable} otherwise.

\item A controller $\Controller\in\fractions{\stable}$ \emph{stabilizes} $\Plant\in\fractions{\stable}$ if the closed loop system of Figure \ref{fig:::Closedloop} from $(\reference \quad \disturbance)^T$ 
%$(\reference \quad \disturbance)^T$ 
to 
%$(\error \quad  \control)^T$ 
$(e \quad u)^T$ given by
\begin{align*}
\hspace{-4mm}
\Closedloop{\Plant,\Controller}:=
\left( 
\begin{matrix}
\dfrac{1}{1-\Plant \, \Controller} & \dfrac{\Plant}{1-\Plant \, \Controller} \vspace{1mm}\\
\dfrac{\Controller}{1-\Plant \, \Controller} & \dfrac{1}{1-\Plant \, \Controller} 
\end{matrix} \right)
\end{align*}
is stable, i.e., if we have $\Closedloop{\Plant,\Controller} \in \stable^{2 \times 2}$. 
\end{enumerate}
\end{definition}

\tikzstyle{element}=[draw,fill=blue!20,rectangle,minimum height=3em, minimum width=6em] 
\tikzstyle{terminal}=[circle,draw] 
\tikzstyle{fleche}=[->,thick,rounded corners=4pt] 
\tikzstyle{sum} = [draw, fill=black!20, circle, node distance=1.5cm]

\begin{figure}
%[here]
\begin{center}
\begin{tikzpicture} 
\node[above right] at (0,0) {$y_r$}; 
\node[above right] at (.8,0) {$+$}; 
\node[above right] at (1.5,-0.6) {$+$}; 
\node[sum] (s) at (1.5,0) {};
\node[above right] at (2,0) {$e$}; 
\node[element] (c) at (4,0) {$c$}; 
\node[above left] at (6,-2) {$u$}; 
\node[above left] at (8,-2) {$d$}; 
\node[above left] at (7.2,-2.5) {$+$}; 
\node[above right] at (6.0,-1.8) {$+$}; 
\node[above left] at (0.5,-2) {$y$}; 

%\node[above right] (y1) at (6.7,0) {$y_1$}; 
\node[element] (p) at (4,-2) {$p$}; 
\node[sum] (s2) at (6.5,-2) {};

\draw[fleche] (0,0) -- (s);
\draw[fleche] (s) -- (c);
\draw[fleche] (c) -- (8,0);
\draw[fleche] (6.5,0) -- (s2);
\draw[fleche] (s2) -- (p);
\draw[fleche] (8,-2) -- (s2);
\draw[fleche] (p) -- (0,-2);
\draw[fleche] (1.5,-2) -- (s);
\end{tikzpicture}
\end{center}
\caption{The control configuration}
\label{fig:::Closedloop}
\end{figure}
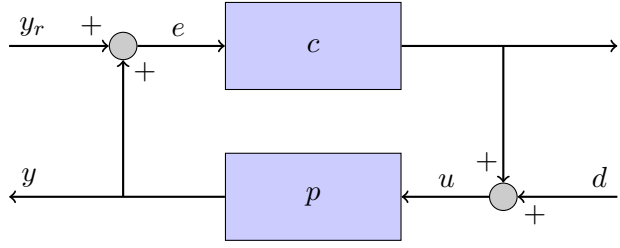

%\begin{figure}[ht]
%\centering
%\begin{overpic}[scale=0.75]{fig1.eps}
%\put(30,130){$\reference$}
%\put(170,130){$\error$}
%\put(320,90){$\Controller$}
%\put(590,130){$\control$}
%\put(530,230){$\disturbance$}
%\put(745,90){$\Plant$}
%\put(910,130){$\measurement$}
%\end{overpic}
%\caption{The control configuration.}
%\label{fig:::Closedloop}
%\end{figure}

Let ${\rm Stab}(\Plant)$ be the set of all the stabilizing controllers of $\Plant$. Note that $\Controller \in {\rm Stab}(\Plant)$ is equivalent to $\Plant \in {\rm Stab}(\Controller)$.

\begin{definition}
Let $\Generator  \in \fractions{\stable}$. Then, we have:
\begin{enumerate}
\item \emph{A fractional representation} of $\Generator$ is defined by $\Generator=\frac{\gamma}{\generator}$, where  $0\neq \generator,\gamma\in\stable$.

%A representation $\Generator=\frac{\gamma}{\generator}$ is a fractional representation of $\Generator  \in \fractions{\stable}$ if $\gamma,\generator\in\stable$.

\item A fractional representation $\Generator=\frac{\gamma}{\generator}$ is called \emph{a coprime factorization} if there exist $\alpha, \, \beta\in\stable$ such that $\alpha \, \gamma-\beta \, \generator=1$.

\item A fractional representation $\Generator=\frac{\gamma}{\generator}$ is called \emph{a weakly coprime factorization} if we have:
\begin{align*}
\forall \; k \in \fractions{\stable}: k \, \gamma, \, k \, \generator\in\stable \implies k\in\stable.
\end{align*}
\end{enumerate}
\end{definition}

% An $\stable$-module $f_1 \stable+\cdots+f_n\stable$, where $f_1,\ldots,f_n\in\fractions{\stable}$, is denoted by $(f_1,\ldots,f_n)$. An $\stable$-submodule of $\fractions{\stable}$ is called \emph{a fractional ideal} if there exists $0\neq a\in\stable$ such that $a\stable\subseteq\stable$. Given two fractional ideals $\I$ and $\J$, we define
%\begin{align*} 
%& IJ=\definedset{\sum_{i=1}^n a_i b_i}{a_i\in \I, b_i\in\J, n\in\N}
%\intertext{and}
%& I:J=\definedset{k\in \fractions{\stable}}{(k)\J\subseteq \I}.
%\end{align*}
%We say that a fractional ideal $\I$ is invertible is there exists a fractional ideal  $\J$ such that $\I\J=\stable$. The fractional ideal $\J$ is called the inverse of $\I$, and we denote it by $\I^{-1}$.

The approach developed in this article is based on the stabilizability results of \cite{Quadrat2003}. The following theorem combines Theorems~1 and 2 of \cite{Quadrat2003}.
\begin{theorem}\label{thm:::Stability}
%Denote $\J=(1,\Plant)$. 
The plant $\Plant$ is stabilizable %if and only if $\J$ is invertible or equivalently 
if and only if there exist $a, \, b\in\stable$ such that:
\begin{align}\label{eqn:::ab}
\left\{ \begin{array}{l}
a-\Plant \,  b=1, \vspace{1mm} \\
\Plant \, a\in\stable.
\end{array}\right.
\end{align} 
%In this case, the inverse of $\J$ is $\J^{-1}=(a,b)$. 
Moreover, a controller $\Controller$ stabilizes $\Plant$ if and only if it is of the form $\Controller=\frac{b}{a}$, where $0\neq a,b\in\stable$ satisfy \eqref{eqn:::ab}. In this case, we have that $a=(1-\Plant \, \Controller)^{-1}$ and $b=\Controller \, (1-\Plant \, \Controller)^{-1}$.
%we have $a=(1-\Plant \, \Controller)^{-1}$. 

If $0\neq a,b\in\stable$ satisfy \eqref{eqn:::ab}, then all the stabilizing controllers of $\Controller$ are parametrized by
\begin{align}\label{eqn:::AllStabilizingControllers}
\Controller(q_1,q_2):=\frac{b+q_1 \, a^2+q_2 \, b^2}{a+q_1 \, \Plant \, a^2+q_2 \, \Plant \, b^2},
\end{align}
where $q_1, \, q_2 \in \stable$ are such that the denominator of (\ref{eqn:::AllStabilizingControllers}) does not vanish.
%, i.e., $a+q_1 \, \Plant \, a^2+q_2 \, \Plant \, b^2 \neq 0$.
\end{theorem}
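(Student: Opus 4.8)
The plan is to establish the parametrization by proving the two inclusions separately: first that every $\Controller(q_1,q_2)$ with non-vanishing denominator lies in ${\rm Stab}(\Plant)$, and then that every stabilizing controller arises in this way for suitable $q_1,q_2\in\stable$. Throughout I keep the fixed solution of \eqref{eqn:::ab} as $a,b$ and abbreviate the numerator and denominator of \eqref{eqn:::AllStabilizingControllers} by $\tilde b:=b+q_1\,a^2+q_2\,b^2$ and $\tilde a:=a+q_1\,\Plant\,a^2+q_2\,\Plant\,b^2$. For the first inclusion I would simply check that $(\tilde a,\tilde b)$ again satisfies \eqref{eqn:::ab} and is stable, and then invoke the characterization already proved in the theorem. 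Stability of $\tilde a,\tilde b$ holds because $\Plant\,a^2=a\,(\Plant\,a)\in\stable$ and $\Plant\,b^2=b\,(\Plant\,b)=b\,(a-1)\in\stable$, the last step being \eqref{eqn:::ab} rewritten as $\Plant\,b=a-1$. A telescoping computation gives $\tilde a-\Plant\,\tilde b=a-\Plant\,b=1$, and $\Plant\,\tilde a=\Plant\,a+q_1\,(\Plant\,a)^2+q_2\,(\Plant\,b)^2\in\stable$; hence $(\tilde a,\tilde b)$ solves \eqref{eqn:::ab} and $\Controller(q_1,q_2)=\tilde b/\tilde a$ stabilizes $\Plant$.

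For the converse, let $\Controller$ be any stabilizing controller and set $\tilde a:=(1-\Plant\,\Controller)^{-1}$, $\tilde b:=\Controller\,(1-\Plant\,\Controller)^{-1}$, so that $(\tilde a,\tilde b)$ is the unique pair attached to $\Controller$ by the theorem and again satisfies \eqref{eqn:::ab}. I would put $\delta:=\tilde b-b\in\stable$. Subtracting the two instances of $\tilde a-\Plant\,\tilde b=1=a-\Plant\,b$ yields $\tilde a-a=\Plant\,\delta$, so $\Plant\,\delta=\tilde a-a\in\stable$; applying $\Plant$ once more and using $\Plant\,\tilde a,\Plant\,a\in\stable$ gives $\Plant^2\,\delta=\Plant\,\tilde a-\Plant\,a\in\stable$. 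Thus $\delta,\Plant\,\delta,\Plant^2\,\delta\in\stable$. It then suffices to produce $q_1,q_2\in\stable$ with $\delta=q_1\,a^2+q_2\,b^2$, for then $\tilde b=b+q_1\,a^2+q_2\,b^2$ and, applying $\Plant$, $\tilde a=a+\Plant\,\delta=a+q_1\,\Plant\,a^2+q_2\,\Plant\,b^2$, whence $\Controller=\tilde b/\tilde a=\Controller(q_1,q_2)$.

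The only real difficulty is this membership $\delta\in(a^2,b^2)$, and I expect it to be the main obstacle. The naive idea of multiplying the squared relation $(a-\Plant\,b)^2=1$ by $\delta$ fails: it produces the cross term $-2\,a\,b\,(\Plant\,\delta)$, which is of degree one in each of $a,b$ and cannot be absorbed into the ideal $(a^2,b^2)$. The remedy is to cube instead. Multiplying $(a-\Plant\,b)^3=1$ by $\delta$ and using $(\Plant\,b)\,\delta=b\,(\Plant\,\delta)$, $(\Plant\,b)^2\,\delta=b^2\,(\Plant^2\,\delta)$, together with the crucial regrouping $b^3\,\Plant^3\,\delta=b^2\,(\Plant\,b)\,(\Plant^2\,\delta)=b^2\,(a-1)\,(\Plant^2\,\delta)$, makes every one of the four monomials acquire a factor $a^2$ or $b^2$ while involving only $\delta,\Plant\,\delta,\Plant^2\,\delta$.

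Collecting the terms would then give the explicit identity $\delta=q_1\,a^2+q_2\,b^2$ with $q_1=a\,\delta-3\,b\,(\Plant\,\delta)\in\stable$ and $q_2=(2\,a+1)\,(\Plant^2\,\delta)\in\stable$, which finishes the converse and hence the proof. The essential point, and the reason the parametrization must use the squares $a^2,b^2$ rather than $a,b$ themselves, is that passing to the cube forces each surviving monomial to carry a square; the single relation \eqref{eqn:::ab}, together with the stability of $\Plant\,\delta$ and $\Plant^2\,\delta$ that \eqref{eqn:::ab} already guarantees, then suffices without any coprimeness of $a$ and $b$.
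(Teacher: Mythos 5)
Your proof is correct. Note that the paper itself gives no proof of this theorem: it is imported from Theorems~1 and 2 of \cite{Quadrat2003}, where the result is obtained by the fractional ideal approach --- stabilizability is phrased as invertibility of the ideal $J=\stable+\stable\,\Plant$, with $J^{-1}=\stable\,a+\stable\,b$, and the parametrization rests on the identity $J^{-2}=\stable\,a^2+\stable\,b^2$. Your argument is a self-contained, elementary unpacking of exactly that identity: you reduce everything to showing that the difference set $\definedset{\delta\in\stable}{\Plant\,\delta\in\stable,\ \Plant^2\delta\in\stable}$ equals $a^2\stable+b^2\stable$, where the inclusion $\supseteq$ is your stability check ($\Plant\,a^2=a\,(\Plant\,a)$, $\Plant\,b^2=b\,(a-1)$, and their squares for $\Plant^2$), and the inclusion $\subseteq$ is your cube identity, which I have verified:
\begin{align*}
\delta=\delta\,(a-\Plant\,b)^3=a^2\,\bigl(a\,\delta-3\,b\,(\Plant\,\delta)\bigr)+b^2\,(2\,a+1)\,(\Plant^2\delta),
\end{align*}
using $(\Plant\,b)\,\delta=b\,(\Plant\,\delta)$, $(\Plant\,b)^2\,\delta=b^2\,(\Plant^2\delta)$ and $(\Plant\,b)^3\,\delta=b^2\,(a-1)\,(\Plant^2\delta)$; the identity is valid over any integral domain (the coefficients $3$ and $2\,a+1$ are just ring elements, so there is no characteristic restriction), and the recovered denominator $a+\Plant\,\delta=(1-\Plant\,\Controller)^{-1}$ is automatically nonzero. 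Two quibbles, neither a genuine gap. First, you invoke the first two assertions of the theorem (the correspondence between stabilizing controllers $\Controller$ and pairs $(a,b)$ satisfying \eqref{eqn:::ab} via $a=(1-\Plant\,\Controller)^{-1}$, $b=\Controller\,(1-\Plant\,\Controller)^{-1}$) without proving them; since they are part of the statement, a complete write-up should add the one-line verification $1-\Plant\,(b/a)=(a-\Plant\,b)/a=1/a$, from which the closed-loop entries are $a$, $\Plant\,a$, $b$ and $a$, all stable --- this also covers the edge case $\tilde b=0$ directly, without worrying about the nonvanishing hypothesis on the numerator. Second, your heuristic remark that the squared relation ``fails'' is not quite accurate: the cross term can be absorbed, since $a\,b\,(\Plant\,\delta)=a^2\,b\,(\Plant\,\delta)-b^2\,(\Plant\,a)\,(\Plant\,\delta)$ by one more multiplication by $a-\Plant\,b=1$, which yields the alternative admissible coefficients $q_1=\delta-2\,b\,(\Plant\,\delta)$ and $q_2=\Plant^2\delta+2\,(\Plant\,a)\,(\Plant\,\delta)$ --- of course, this is just your cube performed in two steps, so the mathematics is unaffected.
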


We make a standing assumption that all the reference and disturbance signals are generated by a fixed signal generator $\Generator\in\fractions{\stable}$, i.e., the reference and disturbance signals are of the form: 
\begin{align*}
\reference:=\Generator \, \measurement_0, \quad \disturbance:=\Generator \, \disturbance_0, \quad \measurement_0, \, \disturbance_0\in\stable.
\end{align*}

\begin{definition}
\begin{enumerate}
\item We say that a controller $\Controller$ is \emph{regulating} $\Plant$ with the signal generator $\Generator$ if
\begin{align*}
\hspace{-6mm}
\error=\left( \begin{matrix} 
\dfrac{1}{1-\Plant \, \Controller} & \dfrac{\Plant}{1-\Plant \, \Controller} \end{matrix} \right) \, \Generator \left(\begin{matrix}\measurement_0 \vspace{1mm} \\ \disturbance_0\end{matrix} \right) \in \stable,
\end{align*}
for all $\measurement_0, \, \disturbance_0\in\stable$, or equivalently if we have:
\begin{align}\label{eqn:::Regulation}
\Generator \, \left( \begin{matrix} 
\dfrac{1}{1-\Plant \, \Controller} & \dfrac{\Plant}{1-\Plant \, \Controller} \end{matrix} \right)   \in \stable^{1 \times 2}.
%\matrices{\stable}.
\end{align}

\item A controller $\Controller$ is called \emph{robustly regulating} with the signal generator $\Generator$ if we have: 
\begin{enumerate}[i.]
\item $\Controller$ stabilizes $\Plant$, i.e., $\Controller \in {\rm Stab}(\Plant)$.
\item $\Controller$ regulates every plant it stabilizes, i.e., if for all $\Plant' \in {\rm Stab}(\Controller)$,  we then have:
\begin{align*}
 \Generator \, \left( \begin{matrix}  \dfrac{1}{1-\Plant' \, \Controller} & \dfrac{\Plant'}{1-\Plant' \, \Controller} \end{matrix} \right) \in \stable^{1 \times 2}.
\end{align*}
\end{enumerate}
\emph{The robust regulation problem} is
the problem of finding a robustly regulating controller. 
\end{enumerate}
\end{definition}

%%%%%%%%%%%%%%%%%%%%%%%%%%%%%%%%%%%%%%%%%%%%%%%%%%%%%%%%%%%%%
%% ==> Solvability:
\section{The internal model principle}\label{sec:::IMP}

The first main result of this paper is the formulation of the internal model principle given by the next theorem, namely Theorem~\ref{thm:::IMP}. This result gives a necessary and sufficient condition for a stabilizing controller to be robustly regulating. 

\begin{lemma}\label{lem:::IMP}
A stabilizing controller $\Controller$ is regulating $\Plant$ 
if and only if there exist $\alpha, \, \beta\in\stable$ such that: 
\begin{align}\label{eqn:::RegulationSolvability}
\Generator=\alpha+\beta \, \Controller.
\end{align}
\end{lemma}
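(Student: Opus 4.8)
The plan is to convert the regulation condition \eqref{eqn:::Regulation} into two membership statements in $\stable$ by invoking Theorem~\ref{thm:::Stability}, and then to solve a simple linear identity. Since $\Controller$ stabilizes $\Plant$, Theorem~\ref{thm:::Stability} lets me write $\Controller=\frac{b}{a}$ with $0\neq a,b\in\stable$ satisfying $a-\Plant\,b=1$ and $\Plant\,a\in\stable$, and moreover $a=(1-\Plant\,\Controller)^{-1}$. Consequently $\frac{1}{1-\Plant\,\Controller}=a$ and $\frac{\Plant}{1-\Plant\,\Controller}=\Plant\,a$, so that \eqref{eqn:::Regulation} is equivalent to the two conditions $\Generator\,a\in\stable$ and $\Generator\,\Plant\,a\in\stable$. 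The whole argument then reduces to showing that these two memberships are equivalent to the existence of $\alpha,\beta\in\stable$ with $\Generator=\alpha+\beta\,\Controller$.

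For the ``only if'' part I assume $\Generator\,a\in\stable$ and $\Generator\,\Plant\,a\in\stable$. Multiplying the identity $a-\Plant\,b=1$ by $\Generator$ gives $\Generator=\Generator\,a-\Generator\,\Plant\,b$. The key observation is that $\Generator\,\Plant\,b=(\Generator\,\Plant\,a)\,\frac{b}{a}=(\Generator\,\Plant\,a)\,\Controller$, since $\Controller=\frac{b}{a}$ in the field $\fractions{\stable}$ and $a\neq 0$. Hence $\Generator=\Generator\,a-(\Generator\,\Plant\,a)\,\Controller$, and I simply set $\alpha:=\Generator\,a$ and $\beta:=-\Generator\,\Plant\,a$, both of which lie in $\stable$ precisely by the regulation hypothesis. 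This yields $\Generator=\alpha+\beta\,\Controller$.

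For the ``if'' part, suppose $\Generator=\alpha+\beta\,\Controller$ with $\alpha,\beta\in\stable$. Using $\Controller\,a=b$ I obtain $\Generator\,a=\alpha\,a+\beta\,b\in\stable$. For the second condition, $\Generator\,\Plant\,a=\alpha\,\Plant\,a+\beta\,\Plant\,b$; here $\Plant\,a\in\stable$ by the second line of \eqref{eqn:::ab}, and $\Plant\,b=a-1\in\stable$ by the first line. Thus $\Generator\,\Plant\,a\in\stable$ as well, and both regulation conditions hold, so $\Controller$ regulates $\Plant$.

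The point to get right is the translation step: recognizing that $\frac{1}{1-\Plant\,\Controller}$ and $\frac{\Plant}{1-\Plant\,\Controller}$ are exactly $a$ and $\Plant\,a$, and that the extra stabilizability requirement $\Plant\,a\in\stable$ (absent in the classical coprime-factorization setting) is what makes the second regulation condition close up in the ``if'' direction. Once this is in place, the rewriting $\Generator\,\Plant\,b=(\Generator\,\Plant\,a)\,\Controller$ is the only real trick, and everything else is a direct substitution that, notably, never uses coprimeness of $a$ and $b$.
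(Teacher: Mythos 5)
Your proof is correct and is essentially the paper's own argument: your choices $\alpha=\Generator\,a$ and $\beta=-\Generator\,\Plant\,a$ coincide with the paper's $\alpha=(1-\Plant\,\Controller)^{-1}\,\Generator$ and $\beta=-(1-\Plant\,\Controller)^{-1}\,\Plant\,\Generator$, and your use of $\Plant\,b=a-1$ in the ``if'' direction is exactly the paper's observation that $\Plant\,\Controller\,(1-\Plant\,\Controller)^{-1}\in\stable$. The only cosmetic difference is that you route the computation through the $(a,b)$ notation of Theorem~\ref{thm:::Stability}, while the paper works directly with the closed-loop fractions.
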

\begin{proof}
Let us first assume that $\Controller$ is regulating. 
Then, we have $\alpha = (1-\Plant \, \Controller)^{-1}\, \Generator \in \stable$ and 
$\beta:=-(1-\Plant \, \Controller)^{-1}  \, \Plant \, \Generator \in \stable$, and thus we get:
\begin{align*}
\Generator= \frac{1-\Plant \, \Controller}{1-\Plant \, \Controller}\, \Generator= \frac{\Generator}{1-\Plant \, \Controller}-\frac{\Generator \, \Plant}{1-\Plant \, \Controller} \, \Controller=\alpha+\beta \, \Controller.
\end{align*}

Let us now assume that there exist $\alpha, \, \beta\in\stable$ such that (\ref{eqn:::RegulationSolvability}) holds. Since $\Controller$ stabilizes $\Plant$, we have 
$(1-\Plant \, \Controller)^{-1}, \, \Controller \,  (1-\Plant \, \Controller)^{-1},  \, (1-\Plant \, \Controller)^{-1} \, \Plant \in \stable$, and thus
\begin{align}\label{eq:stabconds}
\left\{\begin{aligned}
\frac{\Generator}{1-\Plant \, \Controller} & = \alpha \, \frac{1}{1-\Plant \, \Controller}+\beta\, \frac{\Controller}{1-\Plant \, \Controller} \in \stable, \vspace{2mm} \\
\frac{\Generator  \, \Plant}{1-\Plant \, \Controller} & = \alpha \, \frac{\Plant}{1-\Plant \, \Controller}
+\beta \, \frac{\Plant \, \Controller}{1-\Plant \, \Controller} \in\stable,
\end{aligned}
\right.
\end{align}
which proves that $\Controller$ is regulating.
\end{proof}

%Since Lemma~\ref{lem:::IMP} is valid for any $\Plant$ the controller $\Controller$ stabilizes, we have the following Theorem. 

\begin{theorem}\label{thm:::IMP}
A controller $\Controller$ is robustly regulating if and only if it stabilizes $\Plant$ and there exist $\alpha, \beta\in\stable$ such that $\Generator=\alpha+\beta\, \Controller$.
\end{theorem}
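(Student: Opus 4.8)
The plan is to reduce the whole statement to Lemma~\ref{lem:::IMP}, exploiting the crucial feature that the internal model condition $\Generator = \alpha + \beta\,\Controller$ involves only the controller and the signal generator and is entirely independent of which plant is being controlled. I would also use repeatedly the symmetry of the stabilization relation noted just after the first definition, namely that $\Controller \in {\rm Stab}(\Plant)$ is equivalent to $\Plant \in {\rm Stab}(\Controller)$.

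For the forward implication, I would assume $\Controller$ is robustly regulating. Condition (i) of the definition gives directly that $\Controller$ stabilizes $\Plant$, which is half of what we want. Since $\Controller \in {\rm Stab}(\Plant)$ is equivalent to $\Plant \in {\rm Stab}(\Controller)$, the plant $\Plant$ itself is among the plants that $\Controller$ stabilizes, so condition (ii) forces $\Controller$ to regulate $\Plant$. Applying Lemma~\ref{lem:::IMP} to the stabilizing controller $\Controller$ and the plant $\Plant$ then yields $\alpha, \beta \in \stable$ with $\Generator = \alpha + \beta\,\Controller$, which is exactly the desired conclusion.

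For the reverse implication, I would assume $\Controller$ stabilizes $\Plant$ and that $\Generator = \alpha + \beta\,\Controller$ for some $\alpha, \beta \in \stable$. Condition (i) then holds by hypothesis. To verify condition (ii), I would take an arbitrary $\Plant' \in {\rm Stab}(\Controller)$, equivalently a plant $\Plant'$ stabilized by $\Controller$. The key point is that the same identity $\Generator = \alpha + \beta\,\Controller$ remains valid, since it does not refer to the plant at all. Hence Lemma~\ref{lem:::IMP}, now applied to the pair $(\Plant', \Controller)$, shows that $\Controller$ regulates $\Plant'$. As $\Plant'$ was arbitrary in ${\rm Stab}(\Controller)$, condition (ii) is established and $\Controller$ is robustly regulating.

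The argument is short precisely because all the analytic content has already been packaged into Lemma~\ref{lem:::IMP}; the only thing to get right is the interplay of quantifiers. The one step deserving care—and the main (though minor) obstacle—is the reverse direction: one must recognize that Lemma~\ref{lem:::IMP} is a statement about an \emph{arbitrary} stabilizing controller/plant pair, so it may legitimately be invoked for every $\Plant' \in {\rm Stab}(\Controller)$ from the single, plant-free relation $\Generator = \alpha + \beta\,\Controller$. This plant-independence of the internal model condition is exactly what upgrades regulation of the nominal $\Plant$ to regulation of the whole family ${\rm Stab}(\Controller)$, i.e.\ to robustness.
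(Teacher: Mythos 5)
Your proof is correct and follows essentially the same route as the paper: the paper proves necessity ``like in Lemma~\ref{lem:::IMP}'' (i.e.\ by noting that a robustly regulating controller in particular regulates $\Plant$) and proves sufficiency by rerunning the lemma's stability computation with $\Plant$ replaced by an arbitrary $\Plant'\in{\rm Stab}(\Controller)$, which is precisely your key observation that the relation $\Generator=\alpha+\beta\,\Controller$ is plant-independent. The only cosmetic difference is that you invoke Lemma~\ref{lem:::IMP} as a black box for each pair $(\Plant',\Controller)$, whereas the paper inlines the same computation.
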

\begin{proof}
The necessity can be proved like in Lemma~\ref{lem:::IMP}.
% since $\Plant \in {\rm Stab}(\Controller)$.
In order to show the sufficiency, we assume that there exist $\alpha,\beta\in\stable$ such that we have $\Generator= \alpha+\beta \, \Controller$. For all $\Plant' \in {\rm Stab}(\Controller)$, the stability of the closed loop $\Closedloop{\Plant',\Controller}$ yields $(1-\Plant' \, \Controller)^{-1}, \, \Controller \,  (1-\Plant' \, \Controller)^{-1}$, $(1-\Plant' \, \Controller)^{-1} \, \Plant' \in \stable$, so we obtain (\ref{eq:stabconds}) where $\Plant$ is replaced by $\Plant'$. 
%\begin{align*}
%\Generator \, (1-\Plant' \, \Controller)^{-1} = & \,  \alpha\, (1-\Plant' \, \Controller)^{-1}+\beta\,\Controller \,  (1-\Plant' \, \Controller)^{-1}\in\stable, \\
%\Generator (1-\Plant' \, \Controller)^{-1} \, \Plant' = & \, \alpha \, (1-\Plant' \, \Controller)^{-1} \, \Plant' +\beta \, \left((1-\Plant' \, \Controller)^{-1}-1\right) \\
%& \hspace{4.5cm} \in \stable,
%\end{align*}
Thus, $\Controller$ is robustly regulating.
\end{proof}

%in Theorem~\ref{thm:::IMP}, we note that no assumption on $\Plant$ is assumed apart from the stabilizability condition. 

Theorem~\ref{thm:::IMP} proves that for SISO plants, every stabilizing regulating controller is robustly regulating. This result is well-known in the literature for plants admitting coprime factorizations \cite{Vidyasagar}.

According to Theorem~\ref{thm:::IMP}, we will say that a controller $\Controller$ {\em contains an internal model of the generator} if there exist $\alpha, \, \beta \in \stable$ such that $\Generator=\alpha+\beta \, \Controller$. This means that the instability generated by the signal generator $\Generator$ must be built into a robustly regulating controller $\Controller$.

Next, we ask whether the instability generated by the signal generator $\Generator$ can be represented by a single stable element $\generator$. By this, we mean that a controller $\Controller$ that solves the robust regulation problem with the signal generator $\generator^{-1}$ is also robustly regulating with $\Generator$. The following corollary shows that the denominator $\generator$ of any factorization is such an element. 

\begin{corollary}\label{cor:::IMP2}
Let $\Generator=\frac{\gamma}{\generator}$ be a fractional representation of the signal generator. If $\Controller \in {\rm Stab}(\Plant)$ and there exist $\alpha, \, \beta\in\stable$ such that $\generator \, (\alpha+\beta \, \Controller)=1$,
%$1=\alpha \, \generator+\beta \, \generator \, \Controller$, 
then $\Controller$ solves the robust regulation problem. 
\end{corollary}
\begin{proof}
If $\Controller \in {\rm Stab}(\Plant)$ and if there exist $\alpha, \, \beta\in\stable$ such that $\generator \, (\alpha+\beta \, \Controller)=1$, then $\theta \neq 0$ and $\generator^{-1}=\alpha+\beta \, \Controller$, which yields $\Generator=\frac{\gamma}{\generator}=(\gamma \, \alpha)+(\gamma \, \beta) \, \Controller$ and proves the result by Theorem~\ref{thm:::IMP}.
\end{proof}

However, $\generator^{-1}$ in Corollary \ref{cor:::IMP2} may not be a ``minimal'' internal model in the sense that a robustly regulating controller with the signal generator $\Generator$ is not necessarily robustly regulating with $\generator^{-1}$. The next theorem shows that the denominator of a weakly coprime factorization is minimal in this sense.

\begin{theorem}\label{thm:::IMP_WeaklyCoprime}
If $\Generator=\frac{\gamma}{\generator}$ is a weakly coprime factorization, then $\Controller$ solves the robust regulation problem if and only if $\Controller \in {\rm Stab}(\Plant)$ and there exist $\alpha, \, \beta\in\stable$ such that $\generator \, (\alpha+\beta \, \Controller)=1$, i.e. $\Controller$ is robustly regulating for the signal generator $\generator^{-1}$.
\end{theorem}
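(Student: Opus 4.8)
The plan is to treat the two implications separately. The ``if'' direction requires no new work: the stated hypotheses --- $\Controller \in {\rm Stab}(\Plant)$ together with the existence of $\alpha,\beta \in \stable$ satisfying $\generator\,(\alpha + \beta\,\Controller) = 1$ --- are exactly those of Corollary~\ref{cor:::IMP2}, which already concludes that $\Controller$ solves the robust regulation problem. Hence all the content lies in the ``only if'' direction, and the one hypothesis not yet exploited is the weak coprimeness of $\Generator = \frac{\gamma}{\generator}$.

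For ``only if'', I would start from a controller $\Controller$ solving the robust regulation problem. By Theorem~\ref{thm:::IMP} it stabilizes $\Plant$, so the first required condition is free and it remains to manufacture $\alpha,\beta \in \stable$ with $\generator\,(\alpha + \beta\,\Controller) = 1$. Using Theorem~\ref{thm:::Stability}, I fix $0 \neq a,b \in \stable$ with $\Controller = \frac{b}{a}$, $a - \Plant\,b = 1$, $\Plant\,a \in \stable$ and $a = (1-\Plant\,\Controller)^{-1}$. Since a robustly regulating controller in particular regulates $\Plant$ itself, the regulation condition \eqref{eqn:::Regulation}, read through $\frac{1}{1-\Plant\,\Controller} = a$ and $\frac{\Plant}{1-\Plant\,\Controller} = \Plant\,a$, supplies the two stability facts $\Generator\,a \in \stable$ and $\Generator\,\Plant\,a \in \stable$.

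The core of the argument is to feed these facts to the weakly coprime property twice. Testing $k = \frac{a}{\generator}$ gives $k\,\gamma = \Generator\,a \in \stable$ and $k\,\generator = a \in \stable$, so weak coprimeness forces $c := \frac{a}{\generator} \in \stable$, i.e. $a = \generator\,c$. Testing $k = \frac{\Plant\,a}{\generator}$ gives $k\,\gamma = \Generator\,\Plant\,a \in \stable$ and $k\,\generator = \Plant\,a \in \stable$, so $\Plant\,c = \frac{\Plant\,a}{\generator} \in \stable$. Choosing $\alpha := c$ and $\beta := -\Plant\,c$, both in $\stable$, the identities $a = \generator\,c$ and $a - \Plant\,b = 1$ give
\begin{align*}
\generator\,(\alpha + \beta\,\Controller) = \generator\,c\left(1 - \Plant\,\frac{b}{a}\right) = a - \Plant\,b = 1,
\end{align*}
which is the desired internal model for $\generator^{-1}$ and completes this direction.

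I expect the genuine obstacle to be spotting the two test elements $\frac{a}{\generator}$ and $\frac{\Plant\,a}{\generator}$: the whole argument hinges on the observation that regulation provides the numerators $\Generator\,a$ and $\Generator\,\Plant\,a$ in $\stable$ while stabilizability provides $a$ and $\Plant\,a$ in $\stable$, so that weak coprimeness can upgrade ``$\generator$ divides $a\,\gamma$ and $\Plant\,a\,\gamma$'' into the sharper ``$\generator$ divides $a$ and $\Plant\,a$''. This last divisibility is precisely what fails for a general fractional representation, where Corollary~\ref{cor:::IMP2} only yields $\Generator = (\gamma\,\alpha) + (\gamma\,\beta)\,\Controller$ with no mechanism to descend from the numerator $\gamma$ to $1$; this is exactly the minimality gap remarked on just before the theorem.
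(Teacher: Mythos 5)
Your proof is correct and follows essentially the same route as the paper: sufficiency via Corollary~\ref{cor:::IMP2}, and necessity by applying weak coprimeness to exactly the same two test elements $\frac{a}{\generator}$ and $\frac{\Plant\,a}{\generator}$ (with $\Generator\,a,\ \Generator\,\Plant\,a \in \stable$ from regulation and $a,\ \Plant\,a \in \stable$ from \eqref{eqn:::ab}), then assembling $\generator\,(\alpha+\beta\,\Controller)=1$ from $a-\Plant\,b=1$. The only cosmetic difference is that the paper writes the conclusion directly as $\frac{1}{\generator}=\frac{a}{\generator}-\frac{a\,\Plant}{\generator}\,\Controller$, whereas you name $\alpha$ and $\beta$ explicitly; the paper also records, via Theorem~\ref{thm:::IMP}, that the condition is equivalent to robust regulation for $\generator^{-1}$, which justifies the ``i.e.'' clause of the statement.
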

\begin{proof}
By Theorem \ref{thm:::IMP}, $\generator \, (\alpha+\beta \, \Controller)=1$ is equivalent to that $\Controller$ is robustly regulating with $\generator^{-1}$. The sufficiency follows from Corollary~\ref{cor:::IMP2}.
%by Theorem \ref{thm:::IMP} since $\generator^{-1}=\alpha+\beta\Controller$ implies that $\Generator=(\gamma\alpha)+(\gamma\beta)\Controller$. 
In order to show the necessity, we assume that $\Controller$ is a robustly regulating controller. Since $\Controller$ is stabilizing, Theorem \ref{thm:::Stability} shows that there exist $a, \, b\in\stable$ satisfying \eqref{eqn:::ab} such that $\Controller=\frac{b}{a}$. Since $\Controller$ is regulating
$
\gamma  \, \frac{a}{\generator}=\Generator \, a=\Generator \, (1-\Plant \, \Controller)^{-1}\in\stable$
and
$\generator \, \frac{a}{\generator}=a\in\stable$.
Weak coprimeness of the factorization $\Generator=\frac{\gamma}{\generator}$ implies that $\frac{a}{\generator}\in\stable$. Similarly, we can show that $\frac{a \, \Plant}{\generator}\in\stable$. By \eqref{eqn:::ab}, we get
\begin{align*}
\frac{1}{\generator}=\dfrac{a}{\generator}-\dfrac{a \, \Plant}{\generator} \, \Controller
\end{align*}
which completes the proof.
\end{proof}

%If $\Controller$ is a stabilizing controller of $\Plant$, it is well-known that  $\Controller$ admits a coprime factorization if and only if so does $\Plant$ (see, e.g., \cite{Quadrat2003}).

We end this section by showing that a robustly regulating controller of a plant admitting a coprime factorization (e.g., $\Plant \in \stable$) necessarily contains the denominator of a fractional representation of the generator as an internal model.

\begin{theorem}\label{thm:Pcoprime}
Let $\Plant$ admit a coprime factorization $\Plant=\frac{n}{d}$ and $\Controller$ stabilize $\Plant$. Then, $\Controller$ is robustly regulating if and only if the generator $\Generator$ admits a fractional representation $\Generator=\frac{z}{x}$, where $x$ is the denominator of a coprime factorization $\Controller=\frac{y}{x}$. In particular, we have $x \, (\alpha+\beta \, \Controller)=1$ for some $\alpha, \, \beta \in \stable$. Finally, if $\Generator$ admits a coprime factorization $\Generator=\frac{\gamma}{\generator}$, then $x=\delta \, \generator$ for a certain $\delta \in \stable$.
\end{theorem}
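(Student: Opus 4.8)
The plan is to prove the theorem in three movements, mirroring the three assertions in its statement. Throughout I will exploit the coprime factorization $\Plant=\frac{n}{d}$ (with a B\'ezout relation $\tilde{a}\,d - \tilde{b}\,n = 1$ for some $\tilde{a},\tilde{b}\in\stable$) to convert the abstract stabilizability data of Theorem~\ref{thm:::Stability} into genuine coprime factorizations of the controller. The key technical fact I expect to need is that when the plant is coprime, every stabilizing controller $\Controller$ itself admits a coprime factorization $\Controller=\frac{y}{x}$, and moreover the closed-loop entries $(1-\Plant\,\Controller)^{-1}$, $\Controller(1-\Plant\,\Controller)^{-1}$, $\Plant(1-\Plant\,\Controller)^{-1}$ can all be written with the common ``characteristic'' denominator $\Stabilitymatrix := d\,x - n\,y \in \stable$, which is in fact a unit of $\stable$. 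This unit property is what makes the whole argument go through and I will establish it first.

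For the forward direction (necessity) I would argue as follows. Assume $\Controller$ is robustly regulating; by Theorem~\ref{thm:::IMP} there exist $\alpha,\beta\in\stable$ with $\Generator=\alpha+\beta\,\Controller$. Writing $\Controller=\frac{y}{x}$ as a coprime factorization and using that $\Stabilitymatrix=d\,x-n\,y$ is a unit, I substitute to obtain $\Generator = \alpha + \beta\,\frac{y}{x} = \frac{\alpha\,x+\beta\,y}{x}$, which exhibits a fractional representation $\Generator=\frac{z}{x}$ with $z:=\alpha\,x+\beta\,y\in\stable$. Conversely, if $\Generator=\frac{z}{x}$ with $x$ the denominator of the coprime factorization $\Controller=\frac{y}{x}$, then from the coprimeness B\'ezout relation $u\,y - v\,x = 1$ (some $u,v\in\stable$) I get $x^{-1}=u\,\Controller - v$, whence $\Generator = z\,x^{-1} = (-z\,v)+(z\,u)\,\Controller$ is an internal model in the sense of Theorem~\ref{thm:::IMP}, so $\Controller$ is robustly regulating. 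The same B\'ezout relation, multiplied through by $x$, yields $x\,(\alpha+\beta\,\Controller)=1$ with $\alpha=-v$, $\beta=u$, giving the ``In particular'' clause. The heart of the matter here is the equivalence between the additive internal-model condition $\Generator=\alpha+\beta\,\Controller$ and the existence of a fractional representation of $\Generator$ over the controller's denominator; coprimeness of $\Controller$ is exactly what lets me pass between these.

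For the final clause, suppose $\Generator$ admits a coprime factorization $\Generator=\frac{\gamma}{\generator}$. I now have two fractional representations of the same element, namely $\Generator=\frac{z}{x}=\frac{\gamma}{\generator}$, the second being coprime. Cross-multiplying gives $z\,\generator = \gamma\,x$ in $\stable$. Invoking the defining B\'ezout identity of the coprime factorization, $\alpha_0\,\gamma-\beta_0\,\generator=1$ for some $\alpha_0,\beta_0\in\stable$, I compute $x = (\alpha_0\,\gamma-\beta_0\,\generator)\,x = \alpha_0\,(\gamma\,x)-\beta_0\,\generator\,x = \alpha_0\,z\,\generator-\beta_0\,x\,\generator = \generator\,(\alpha_0\,z-\beta_0\,x)$, so setting $\delta:=\alpha_0\,z-\beta_0\,x\in\stable$ yields $x=\delta\,\generator$ as required.

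The step I expect to be the main obstacle is justifying that $\Stabilitymatrix = d\,x-n\,y$ is a \emph{unit}, since this is the linchpin that turns the stabilizability conditions \eqref{eqn:::ab} into honest coprime factorizations of the controller and guarantees the denominators behave as claimed. For coprime plant factorizations this is the classical determinant-of-the-closed-loop argument: the stability of all four entries of $\Closedloop{\Plant,\Controller}$ together with the two B\'ezout relations forces $\Stabilitymatrix^{-1}\in\stable$. I would derive this explicitly from Theorem~\ref{thm:::Stability} by identifying $a=(1-\Plant\,\Controller)^{-1}$ and $b=\Controller\,(1-\Plant\,\Controller)^{-1}$ and tracking how they combine with the plant coprimeness relation; once $\Stabilitymatrix$ is a unit, the existence of the coprime factorization $\Controller=\frac{y}{x}$ and all subsequent manipulations are routine bookkeeping over the integral domain $\stable$.
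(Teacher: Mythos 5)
Your proof is correct, but it routes several steps differently from the paper's, mostly to its advantage. For necessity, the paper does not invoke Theorem \ref{thm:::IMP}: it extracts $x\,\Generator\in\stable$ directly from the two regulation conditions, via $(1-\Plant\,\Controller)^{-1}\,\Generator=u^{-1}\,d\,x\,\Generator$ and $(1-\Plant\,\Controller)^{-1}\,\Plant\,\Generator=u^{-1}\,n\,x\,\Generator$, combined as $x\,\Generator=x\,(u^{-1}\,d\,x\,\Generator)-y\,(u^{-1}\,n\,x\,\Generator)$. You instead quote Theorem \ref{thm:::IMP} to get $\Generator=\alpha+\beta\,\Controller$ and clear the denominator, $z=\alpha\,x+\beta\,y$; this is shorter, and note that it does not actually use the unit property of $\Delta=d\,x-n\,y$, contrary to your claim that this property is the linchpin --- all this step needs is that a coprime factorization $\Controller=\frac{y}{x}$ exists. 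Likewise, for sufficiency and the ``in particular'' clause the paper appeals to the stabilization relation $d\,x-n\,y=u$ to write $x\,(u^{-1}\,d-u^{-1}\,n\,\Controller)=1$ and concludes by Corollary \ref{cor:::IMP2}, while you use the controller's own B\'ezout identity $u\,y-v\,x=1$ to get $x\,(u\,\Controller-v)=1$ directly; both work, and yours is the more economical. Your treatment of the final clause is in fact more robust than the paper's: the paper sets $\delta=\frac{z}{\gamma}$ and only afterwards shows it is stable (tacitly assuming $\gamma\neq 0$), whereas multiplying $x$ by $1=\alpha_0\,\gamma-\beta_0\,\generator$ and using the cross-multiplication identity $z\,\generator=\gamma\,x$ gives $x=\generator\,(\alpha_0\,z-\beta_0\,x)$ without exception. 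The one step you should tighten is the deferred lemma: both directions presuppose that a stabilizing controller of a plant admitting a coprime factorization itself admits one (the paper silently imports this, together with the unit relation, from Vidyasagar), and your sketch of it is circular as phrased, since $\Delta$ is defined through the very $x,y$ whose existence is in question. The gap is easily filled from Theorem \ref{thm:::Stability}: with $a=(1-\Plant\,\Controller)^{-1}$ and $b=\Controller\,a$ satisfying \eqref{eqn:::ab}, and $\tilde{a}\,d-\tilde{b}\,n=1$, set $x:=\tilde{a}\,a-\tilde{b}\,(\Plant\,a)=\frac{a}{d}\in\stable$ and $y:=\tilde{a}\,b-\tilde{b}\,(\Plant\,b)=\frac{b}{d}\in\stable$ (using $\Plant\,b=a-1\in\stable$); then $\Controller=\frac{y}{x}$ and $d\,x-n\,y=a-\Plant\,b=1$, which serves at once as the coprimeness certificate for $(y,x)$ and as the unit relation. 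With that lemma inserted, your argument is complete.
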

\begin{proof}
Let us suppose that $\Controller$ robustly regulates $\Plant$. If $\Plant=\frac{n}{d}$ and $\Controller=\frac{y}{x}$ are coprime factorizations, then a standard result asserts that $\Controller$ stabilizes $\Plant$ if and only if $d \, x-n \, y=u$, where $u$ is an invertible element of $\stable$, i.e. $u^{-1}\in\stable$ \cite{Vidyasagar}. Then, we have:
\begin{align*}
\left \{\begin{aligned}
\frac{\Generator}{1-\Plant \, \Controller}  & = u^{-1}\,d\, x \, \Generator \in \stable, \\
\frac{\Plant \, \Generator}{1-\Plant \, \Controller} & = u^{-1}\,n \, x \, \Generator \in \stable.
\end{aligned}
\right. 
\end{align*}
Therefore, we get
\begin{align*}
x \, \Generator=x \, (u^{-1}\,d\, x \, \Generator)- y \, ( u^{-1}\, n \, x \, \Generator) \in \stable,
\end{align*}
and thus there exists $z \in \stable$ such that $\Generator=\frac{z}{x}$.  Moreover, we have: 
\begin{align}\label{eq:coprime}
x \, (u^{-1}\,  d-u^{-1}\, n \, \Controller)=1.
\end{align}
 
Conversely, if $\Generator=\frac{z}{x}$, where $x$ is the denominator of a coprime factorization $\Controller=\frac{y}{x}$ and $z\in\stable$, then we have $d \, x-n \, y=u$, where $u$ is a unit of $\stable$, which yields (\ref{eq:coprime}) and proves that $\Controller$ robustly regulates $\Plant$ by Corollary~\ref{cor:::IMP2}.

Finally, if $\Generator=\frac{\gamma}{\generator}$ is a coprime factorization, then there exist $\varepsilon, \, \nu \in \stable$
such that $\generator \, \nu-\gamma \, \varepsilon=1$. Then we have $\Generator=\frac{\gamma}{\generator}=\frac{z}{x}$, i.e., $x=\frac{z}{\gamma} \, \generator$, and
\begin{align*}
\delta:=\frac{z}{\gamma}=\frac{z \, (\generator \, \nu-\gamma \, \varepsilon)}{\gamma}=x \, \nu-z \, \varepsilon \in \stable.
\end{align*}
\end{proof}

\section{Solvability of the robust regulation problem}\label{sec:::Solvability}

%\red{In Section~\ref{sec:::IMP}, we have given conditions for the robust regulating problem which involve the generator $\Generator$ and the controller $\Controller$. In this section, we obtain conditions connecting $\Generator$ and the plant $\Plant$.}

In this section, we give necessary and sufficient conditions for the solvability of the robust regulation problem. The first lemma gives a solvability condition for stable plants.

\begin{lemma}\label{lem:::StablePlantSolvability}
If $\Plant\in\stable$, then the robust regulation problem is solvable if and only if:   
\begin{align}\label{eqn:::Controller1}
\exists \; \alpha, \, \beta\in\stable: \quad \alpha \, \Generator^{-1}-\beta \, \Plant=1.
\end{align}
\end{lemma}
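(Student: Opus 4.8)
The plan is to reduce everything to the internal model principle (Theorem~\ref{thm:::IMP}): since $\Plant\in\stable$, a controller is robustly regulating exactly when it stabilizes $\Plant$ and admits $\alpha',\beta'\in\stable$ with $\Generator=\alpha'+\beta'\,\Controller$. The two directions then become short algebraic manipulations that hinge on the stabilizability description of Theorem~\ref{thm:::Stability}.

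For necessity, I would suppose a robustly regulating controller $\Controller$ exists. Being robustly regulating, it stabilizes and regulates $\Plant$, so by Theorem~\ref{thm:::Stability} we may write $\Controller=\frac{b}{a}$ with $a,b\in\stable$, $a-\Plant\,b=1$ and $a=(1-\Plant\,\Controller)^{-1}$. The regulation condition \eqref{eqn:::Regulation} gives $\Generator\,a=\Generator\,(1-\Plant\,\Controller)^{-1}\in\stable$. Setting $\alpha:=\Generator\,a\in\stable$ and $\beta:=b\in\stable$ and using $\alpha\,\Generator^{-1}=a$, one obtains $\alpha\,\Generator^{-1}-\beta\,\Plant=a-b\,\Plant=1$, which is exactly \eqref{eqn:::Controller1}. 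No division is needed here, so this direction is essentially immediate once the right $\alpha,\beta$ are guessed.

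For sufficiency, I would assume \eqref{eqn:::Controller1}. The key observation is that the identity already forces stability of the relevant quantity: rewriting it as $\alpha\,\Generator^{-1}=1+\beta\,\Plant$ shows $\alpha\,\Generator^{-1}\in\stable$, since $\Plant,\beta\in\stable$. Taking the candidate $\Controller:=\frac{\beta\,\Generator}{\alpha}$ (assuming $\alpha\neq0$), multiplying \eqref{eqn:::Controller1} by $\Generator$ yields $\alpha-\Plant\,\beta\,\Generator=\Generator$, hence $1-\Plant\,\Controller=\frac{\Generator}{\alpha}$ and $(1-\Plant\,\Controller)^{-1}=\alpha\,\Generator^{-1}=1+\beta\,\Plant\in\stable$. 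I then check the remaining closed-loop entries: $\Plant\,(1-\Plant\,\Controller)^{-1}=\Plant\,(1+\beta\,\Plant)\in\stable$ and $\Controller\,(1-\Plant\,\Controller)^{-1}=\beta\in\stable$, so $\Controller\in{\rm Stab}(\Plant)$. Finally $\beta\,\Generator=\alpha\,\Controller$ gives $\Generator=\alpha-\Plant\,\beta\,\Generator=\alpha+(-\Plant\,\alpha)\,\Controller$ with $\alpha,-\Plant\,\alpha\in\stable$, so the internal model condition holds and $\Controller$ is robustly regulating by Theorem~\ref{thm:::IMP}.

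The main obstacle is the sufficiency direction, and specifically the realization that the hypothesis already guarantees stability of $(1-\Plant\,\Controller)^{-1}$, i.e. that $\alpha\,\Generator^{-1}=1+\beta\,\Plant$ automatically lands in $\stable$ so that the naive controller $\frac{\beta\,\Generator}{\alpha}$ genuinely stabilizes $\Plant$; without this remark the candidate looks unstable and the construction appears to fail. The only remaining care is the degenerate case $\alpha=0$, which forces $-\beta\,\Plant=1$, i.e. $\Plant$ is a unit of $\stable$; in that case the quotient construction is not available and one instead builds the denominator of $\Generator$ directly into a stabilizing controller using $\Plant^{-1}\in\stable$.
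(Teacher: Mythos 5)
Your proof is correct and follows essentially the same route as the paper's: the necessity argument (take $\alpha=\Generator\,a$, $\beta=b$ from Theorem~\ref{thm:::Stability}) and the sufficiency construction (the controller $\Controller=\frac{\beta\,\Generator}{\alpha}$, justified by the key observation that \eqref{eqn:::Controller1} forces $\alpha\,\Generator^{-1}=1+\beta\,\Plant\in\stable$) are exactly those of the paper, your direct verification of the closed-loop entries and of the internal model identity $\Generator=\alpha+(-\Plant\,\alpha)\,\Controller$ being a straightforward repackaging of the paper's appeal to Theorem~\ref{thm:::Stability} and to $\Generator\,(1-\Plant\,\Controller)^{-1}=\alpha$. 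The only divergence is the degenerate case: the paper stays inside the quotient construction by replacing $(\alpha,\beta)=(0,\beta)$ with $(h,(1-h\,\Generator^{-1})\,\beta)$ for any $0\neq h\in\stable$ with $h\,\Generator^{-1}\in\stable$, whereas you treat $\alpha=0$ separately via $\Plant^{-1}=-\beta\in\stable$; your sketch does close (e.g.\ $\Controller:=\Plant^{-1}\,\frac{\generator-1}{\generator}$ for any fractional representation $\Generator=\frac{\gamma}{\generator}$ gives $(1-\Plant\,\Controller)^{-1}=\generator$ and $\Generator\,(1-\Plant\,\Controller)^{-1}=\gamma\in\stable$), so the difference is cosmetic.
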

\begin{proof}
%In order to show the necessity, 
Let us first assume that $\Controller$ is a robustly regulating controller. Theorem~\ref{thm:::Stability} shows that $\Controller=\frac{b}{a}$, where $a,b\in\stable$ satisfy \eqref{eqn:::ab}. Since $\Controller$ is regulating, we have $a \, \Generator 
%=\Generator \, (1-\Plant \, \Controller)^{-1}
\in\stable$. Thus, 
%\begin{align*}
$1=a-b \, \Plant=(a \, \Generator) \, \Generator^{-1}-b \, \Plant$,
%\end{align*}
which proves the necessity.

%For the sufficiency part, 
Let us now assume that there exist $\alpha, \, \beta\in\stable$ such that we have (\ref{eqn:::Controller1}).
%\begin{align}\label{eqn:::Controller1}
%1=\alpha \, \Generator^{-1}-\beta \, \Plant.
%\end{align}
If $\alpha=0$, then
\begin{align*}
h \, \Generator^{-1}-(1-h \, \Generator^{-1})\, \beta \, \Plant=1,
\end{align*}
where $h\in\stable \setminus \{0\}$ is chosen so that $h \, \Generator^{-1}\in\stable$. Thus, without restricting generality, we can assume that $\alpha\neq 0$. Since $\beta \, \Plant\in\stable$, we see that $\alpha \, \Generator^{-1}\in\stable$, and $\Plant \, \alpha \, \Generator^{-1}\in\stable$. Thus, the equation \eqref{eqn:::Controller1} implies that $\Controller:=\frac{\beta}{\alpha} \, \Generator$ stabilizes $\Plant$ by Theorem~\ref{thm:::Stability}. Furthermore,
%\begin{align*}
$\Generator \, (1-\Plant \, \Controller)^{-1} =\Generator \, \alpha \, \Generator^{-1}=\alpha\in\stable$,
%\end{align*}
which is enough to show that $\Controller$ is robustly regulating.
\end{proof}

We now state the main results of this section: two necessary and sufficient solvability conditions for the robust regulation problem. In the next theorem, we convert the problem of solvability into a robust regulation problem of a stable plant. A checkable condition for the solvability follows (see Corollary~\ref{cor:::Solvability}). Let us first state a useful lemma.

\begin{lemma}\label{lemma:product}
Let $\Controller \in {\rm Stab}(\Plant)$, $a:=(1-\Plant \, \Controller)^{-1} \in \stable$, $b:=\Controller\, (1- \Plant \, \Controller)^{-1} \in \stable$ and $\Controller_i \in {\rm Stab}(b \, \Plant)$. Then, we have: 
\begin{align}\label{eqn:::RobustlyRegulatingController}
\Controller_r:=\Controller \, (1+\Controller_i) \in {\rm Stab}(\Plant).
\end{align}
Hence, the controllers of the form
\begin{align}\label{def:subparametrization}
%\forall \; \widetilde{q} \in \stable, \quad 
\widetilde{\Controller}(\widetilde{q})=\Controller \, \left( 1+ \frac{\widetilde{q}}{1+b \, \Plant \, \widetilde{q}} \right),
\end{align}
where $\widetilde{q}\in\stable$, stabilize $\Plant$, i.e., $\widetilde{\Controller}(\widetilde{q}) \in {\rm Stab}(\Plant)$ for all $\widetilde{q} \in \stable$. The controllers of the form (\ref{def:subparametrization}) are obtained by choosing $q_1=b \, \widetilde{q}$ and $q_2=-(a \, \Plant) \, \widetilde{q}$ in
(\ref{eqn:::AllStabilizingControllers}), 
and we have:
\begin{align}\label{def:aproduct}
\frac{1}{1-\Plant \, \widetilde{\Controller}(\widetilde{q})}:=a \, (1+b \, p \, \widetilde{q}).
\end{align}
 Finally, if $\Controller_i$ robustly regulates $b\, \Plant$, then $\Controller_r$ is robustly regulating for $\Plant$.
\end{lemma}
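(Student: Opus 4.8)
The plan is to reduce robust regulation to ordinary regulation of the nominal plant. By Theorem~\ref{thm:::IMP} together with Lemma~\ref{lem:::IMP}, a controller that stabilizes $\Plant$ is robustly regulating if and only if it regulates $\Plant$, since both properties are equivalent to the internal model condition $\Generator=\alpha+\beta\,\Controller_r$ with $\alpha,\beta\in\stable$. Because the stabilization $\Controller_r\in{\rm Stab}(\Plant)$ is already supplied by \eqref{eqn:::RobustlyRegulatingController}, it suffices to verify the regulation condition \eqref{eqn:::Regulation} for $\Controller_r$.

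The key step is an explicit factorization of the return difference of the outer loop. Writing $a_i:=(1-b\,\Plant\,\Controller_i)^{-1}$, which lies in $\stable$ because $\Controller_i\in{\rm Stab}(b\,\Plant)$ (it is the $(1,1)$ entry of $\Closedloop{b\,\Plant,\Controller_i}$), I would use $a-\Plant\,b=1$ from \eqref{eqn:::ab}, in the form $\Plant\,b=a-1$ and $\Plant\,\Controller=1-a^{-1}$, to compute
\begin{align*}
1-\Plant\,\Controller_r=1-\Plant\,\Controller\,(1+\Controller_i)=a^{-1}\,(1-b\,\Plant\,\Controller_i),
\end{align*}
whence $(1-\Plant\,\Controller_r)^{-1}=a\,a_i$. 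This is the generalization of \eqref{def:aproduct} from the sub-parametrization \eqref{def:subparametrization} to an arbitrary inner controller $\Controller_i$.

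With this identity the two regulation entries factor through the inner loop. Since $\Controller_i$ robustly regulates $b\,\Plant$ and $b\,\Plant\in{\rm Stab}(\Controller_i)$, it in particular regulates $b\,\Plant$, so the inner-loop instance of \eqref{eqn:::Regulation} gives $\Generator\,a_i=\Generator\,(1-b\,\Plant\,\Controller_i)^{-1}\in\stable$. Then
\begin{align*}
\frac{\Generator}{1-\Plant\,\Controller_r}=a\,(\Generator\,a_i)\in\stable,\qquad
\frac{\Generator\,\Plant}{1-\Plant\,\Controller_r}=(\Plant\,a)\,(\Generator\,a_i)\in\stable,
\end{align*}
the first because $a\in\stable$ and the second because $\Plant\,a\in\stable$ by \eqref{eqn:::ab}. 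Hence \eqref{eqn:::Regulation} holds for $\Controller_r$, so $\Controller_r$ regulates $\Plant$; being stabilizing, it is robustly regulating by Theorem~\ref{thm:::IMP}.

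I expect the main obstacle to be the factorization $(1-\Plant\,\Controller_r)^{-1}=a\,a_i$ rather than the concluding stability bookkeeping. A tempting shortcut, namely taking the inner internal model $\Generator=\alpha_i+\beta_i\,\Controller_i$ and substituting $\Controller_i=\Controller_r/\Controller-1$, fails because it produces a factor $1/\Controller=a/b$ that need not be stable. Routing instead through the regulation condition \eqref{eqn:::Regulation} and exploiting $a,\Plant\,a\in\stable$ is precisely what avoids dividing by $\Controller$ (equivalently by $b$), and this is the crux of the argument.
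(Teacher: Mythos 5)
Your treatment of the final claim is correct and is essentially the paper's own argument: your factorization $1-\Plant\,\Controller_r=a^{-1}\,(1-b\,\Plant\,\Controller_i)$ is exactly the paper's identity \eqref{eqn:::Help}, and concluding via $\Generator\,(1-\Plant\,\Controller_r)^{-1}=a\,(\Generator\,a_i)\in\stable$ and $\Generator\,\Plant\,(1-\Plant\,\Controller_r)^{-1}=(\Plant\,a)\,(\Generator\,a_i)\in\stable$, where $a_i:=(1-b\,\Plant\,\Controller_i)^{-1}$, followed by the internal model principle, is how the paper finishes as well.

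However, as a proof of the lemma the proposal has a genuine gap: it establishes only one of the lemma's assertions and assumes another. First, \eqref{eqn:::RobustlyRegulatingController} is a \emph{conclusion} of the lemma, not a hypothesis; your statement that the stabilization $\Controller_r\in{\rm Stab}(\Plant)$ is ``already supplied'' misreads the claim, and stabilization must be proved. Your computations do give two of the three required closed-loop entries for free, namely $(1-\Plant\,\Controller_r)^{-1}=a\,a_i\in\stable$ and $\Plant\,(1-\Plant\,\Controller_r)^{-1}=(\Plant\,a)\,a_i\in\stable$, but you never check the entry $\Controller_r\,(1-\Plant\,\Controller_r)^{-1}=b\,(1+\Controller_i)\,a_i=b\,(a_i+\Controller_i\,a_i)$, which is stable because $a_i$ and $\Controller_i\,a_i$ are entries of $\Closedloop{b\,\Plant,\Controller_i}$ and $b\in\stable$ --- an easy but necessary step. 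Second, you do not address the middle assertions at all: (i) that $\widetilde{\Controller}(\widetilde{q})\in{\rm Stab}(\Plant)$ for every $\widetilde{q}\in\stable$, which requires knowing that $\widetilde{q}/(1+b\,\Plant\,\widetilde{q})\in{\rm Stab}(b\,\Plant)$; the paper obtains this by applying \eqref{eqn:::AllStabilizingControllers} to the stable plant $b\,\Plant$ with $a'=1$, $b'=0$; and (ii) the identification within the Youla parametrization, i.e., that the choices $q_1=b\,\widetilde{q}$ and $q_2=-(a\,\Plant)\,\widetilde{q}$ give $q:=q_1\,a^2+q_2\,b^2=a\,b\,\widetilde{q}\,(a-\Plant\,b)=a\,b\,\widetilde{q}$ by \eqref{eqn:::ab}, from which \eqref{def:subparametrization} and \eqref{def:aproduct} follow by substitution into $\Controller(q)=(b+q)/(a+\Plant\,q)$. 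Your observation that \eqref{def:aproduct} is the specialization $a_i=1+b\,\Plant\,\widetilde{q}$ of your factorization is correct, but it only becomes a proof of that part once the ${\rm Stab}(b\,\Plant)$ membership in (i) is in place.
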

\begin{proof}
We clearly have:
\begin{align}\label{eqn:::Help}
\frac{1}{1-\Plant \, \Controller_r}=\frac{1}{(1-\Plant \, \Controller) \, (1-b \, \Plant \, \Controller_i)}.
\end{align}
Moreover, we also have:
\begin{align}
\frac{\Controller_r}{1-\Plant \, \Controller_r} & =\frac{\Controller}{(1-\Plant \, \Controller)} \, \frac{(1+c_i)}{(1-b \, \Plant \, \Controller_i)}, \vspace{1mm} \notag\\ 
\frac{\Plant}{1-\Plant \, \Controller_r} & =\frac{\Plant}{(1-\Plant \, \Controller)} \, \frac{1}{(1-b \, \Plant \, \Controller_i)}.\label{eqn:::Help2}
\end{align}
Now, using $\Controller \in {\rm Stab}(\Plant)$ and $\Controller_i \in {\rm Stab}(b \, \Plant)$, we obtain $\Controller_r \in {\rm Stab}(\Plant)$. Since $b \, \Plant\in \stable$, considering $a'=1$ and $b'=0$, we get $a'- b' \, (b \, \Plant)=1$ and using (\ref{eqn:::AllStabilizingControllers}), all the stabilizing controllers of $b \, \Plant$ are of the form $\frac{q}{1+b \, \Plant \, q}$ for all $q \in \stable$, which shows that $\widetilde{c}(\widetilde{q})$ of (\ref{def:subparametrization}) stabilizes $\Plant$.

By (\ref{eqn:::AllStabilizingControllers}), all the stabilizing controllers of $\Plant$ are 
\begin{align*}
\Controller(q):=\frac{b+q}{a+\Plant  \, q},
\end{align*}
where $q:=q_1 \, a^2+q_2 \, b^2$ and $q_1, \, q_2 \in \stable$. Using (\ref{eqn:::ab}), we then have:
\begin{align*}
\Controller(q) %&
=\Controller \, \frac{a \, (b+q)}{b \, (a+\Plant  \, q)}%=c \, \frac{a \, b+(b \, \Plant+1) \, q}{b \, (a+\Plant  \, q)} \\
%&
=c\, \left(1+\frac{q}{b \, (a+\Plant  \, q)} \right).
\end{align*}
Considering $q_1=b \, \widetilde{q}$ and $q_2=-(a \, \Plant) \, \widetilde{q}$ for $\widetilde{q} \in \stable$, we get $q=q_1 \, a^2+q_2 \, b^2=a \, b \, \widetilde{q} \, (a-b \, \Plant)=a \, b \, \widetilde{q}$ and:
\begin{align*}
c(q)=c \, \left(1+ \frac{a \, b \, \widetilde{q}}{a \, b +a \, b^2 \, \Plant \,  \widetilde{q}} \right)=c \, \left(1+ \frac{\widetilde{q}}{1 +b \, \Plant \, \widetilde{q}} \right).
\end{align*}
Substituting $q=a \, b \, \widetilde{q}$ into $a+p \, q$, we get (\ref{def:aproduct}).
%: 
%$$ \frac{1}{1-\Plant \, \widetilde{\Controller}(\widetilde{q})}=a\, (1+\Plant \, b \, \widetilde{q}).$$

If it is assumed that $\Controller_i$ robustly regulates $b\, \Plant$, then $\Generator\, (1-\Plant\, b \, \Controller_i)^{-1}\in\stable$. Thus, (\ref{eqn:::Help}) and (\ref{eqn:::Help2}) both multiplied by $\Generator$ are stable, and $\Controller_r$ is robustly regulating.
\end{proof}

\begin{theorem}\label{thm:::Solvability}
The robust regulation problem is solvable if and only if there exists a stabilizing controller $\Controller=\frac{b}{a}$ such that \eqref{eqn:::ab} holds and there exist $\alpha, \, \beta\in\stable$ such that: 
\begin{align}\label{eq:fund}
\alpha \, \Generator^{-1}-\beta \, b \, \Plant=1.
\end{align}
\end{theorem}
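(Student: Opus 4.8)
The plan is to reduce the robust regulation problem for $\Plant$ to the robust regulation problem for the \emph{stable} plant $b\,\Plant$, and then to invoke Lemma~\ref{lem:::StablePlantSolvability} and Lemma~\ref{lemma:product}. The crucial observation is that, whenever $\Controller=\frac{b}{a}$ satisfies \eqref{eqn:::ab}, the element $b\,\Plant=\Plant\,b=a-1$ lies in $\stable$, so $b\,\Plant$ is a stable plant and the equation \eqref{eq:fund} is exactly condition \eqref{eqn:::Controller1} of Lemma~\ref{lem:::StablePlantSolvability} applied to $b\,\Plant$ in place of $\Plant$. Thus \eqref{eq:fund} will be read as ``the robust regulation problem for the stable plant $b\,\Plant$ is solvable''.

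For sufficiency, I would start from a stabilizing controller $\Controller=\frac{b}{a}$ with \eqref{eqn:::ab} and $\alpha,\beta$ satisfying \eqref{eq:fund}. Since $b\,\Plant=a-1\in\stable$, Lemma~\ref{lem:::StablePlantSolvability} yields a controller $\Controller_i$ that robustly regulates $b\,\Plant$; in particular $\Controller_i\in{\rm Stab}(b\,\Plant)$. By Theorem~\ref{thm:::Stability}, the chosen $a,b$ are precisely $a=(1-\Plant\,\Controller)^{-1}$ and $b=\Controller\,(1-\Plant\,\Controller)^{-1}$, so the hypotheses of Lemma~\ref{lemma:product} are met. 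Its final assertion then gives that $\Controller_r:=\Controller\,(1+\Controller_i)$ robustly regulates $\Plant$, so the problem is solvable.

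For necessity, I would take any robustly regulating controller $\Controller_r$ for $\Plant$ and write it as $\Controller_r=\frac{b}{a}$ via Theorem~\ref{thm:::Stability}, so that \eqref{eqn:::ab} holds and $a=(1-\Plant\,\Controller_r)^{-1}$. Because $\Controller_r$ regulates $\Plant$, the first component of \eqref{eqn:::Regulation} gives $\Generator\,a\in\stable$. The point is then the economical choice $\alpha:=\Generator\,a\in\stable$ and $\beta:=1$: using $b\,\Plant=a-1$ one computes $\alpha\,\Generator^{-1}-\beta\,b\,\Plant=a-(a-1)=1$, which is exactly \eqref{eq:fund} for the stabilizing controller $\Controller=\Controller_r=\frac{b}{a}$.

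I expect no serious obstacle here: once the identity $b\,\Plant=a-1$ and the re-reading of \eqref{eq:fund} through Lemma~\ref{lem:::StablePlantSolvability} are in place, both directions are short. The only mildly delicate points are (i) checking that the $a,b$ produced by Theorem~\ref{thm:::Stability} are genuinely the resolvent quantities $a=(1-\Plant\,\Controller)^{-1}$, $b=\Controller\,(1-\Plant\,\Controller)^{-1}$ required as hypotheses in Lemma~\ref{lemma:product} (so that $\Controller_r=\Controller\,(1+\Controller_i)$ may legitimately be formed), and (ii) spotting the witness $\alpha=\Generator\,a$, $\beta=1$ in the necessity direction rather than attempting to solve \eqref{eq:fund} for $\alpha,\beta$ by brute force.
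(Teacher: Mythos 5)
Your proposal is correct and follows essentially the same route as the paper: the necessity direction uses the identical witness $\alpha=\Generator\,a$, $\beta=1$ (the paper writes $1=a-b\,\Plant=(a\,\Generator)\,\Generator^{-1}-b\,\Plant$), and the sufficiency direction is exactly the paper's reduction to the stable plant $b\,\Plant$ via Lemma~\ref{lem:::StablePlantSolvability} followed by Lemma~\ref{lemma:product} to conclude that $\Controller_r=\Controller\,(1+\Controller_i)$ is robustly regulating. Your explicit verification that the $a,b$ produced by Theorem~\ref{thm:::Stability} coincide with the quantities $a=(1-\Plant\,\Controller)^{-1}$, $b=\Controller\,(1-\Plant\,\Controller)^{-1}$ assumed in Lemma~\ref{lemma:product} is a detail the paper leaves implicit, but it does not change the argument.
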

\begin{proof}
If $\Controller=\frac{b}{a}$, with $a$ and $b$ satisfying (\ref{eqn:::ab}), is robustly regulating, then we have $a \, \Generator \in \stable$ and $1=a-b \, \Plant=(a \,  \Generator) \, \Generator^{-1}-b \, \Plant$. This shows the necessity.

We next show the sufficiency. Lemma \ref{lem:::StablePlantSolvability} shows that there exists $\Controller_i$ that robustly regulates $b\, \Plant$. Now $\Controller_r=\Controller\, (1+\Controller_i)$ solves the robust regulation problem by Lemma \ref{lemma:product}.

%Since $\Controller \in {\rm Stab}(\Plant)$, then we can consider (\ref{def:subparametrization}) with 
%$\widetilde{q}=\beta$.
% which corresponds to (\ref{eqn:::AllStabilizingControllers}) with $q=a \, b \, \widetilde{q}$ by Lemma~\ref{lemma:product}. 
% Then, using (\ref{eq:fund}), we obtain:
%$$\widetilde{c}(\beta)=c \, \left( 1+\frac{\beta}{1+b \, p \, \beta}Ê\right)=c \, \left(1+\frac{\beta}{\alpha} \, \Generator \right).$$
%Moreover, (\ref{def:aproduct}) yields
%$$\left \{
%\begin{aligned}
%\frac{\Generator}{1-\Plant \, \widetilde{\Controller}(\beta)} &=a \, (1+b \, p \, \beta)=\Generator^{-1} \, a \, \alpha \, \Generator=a \, \alpha \in \stable, \vspace{1mm} \\
%\frac{\Generator \, \Plant}{1-\Plant \, \widetilde{\Controller}(\beta)} &=(a \, \Plant) \, \alpha \in \stable, 
%\end{aligned}
%\right.
%$$
%which proves that $\widetilde{c}(\beta)$ robustly regulates $\Plant$.
\end{proof}

%\begin{remark}
%{\em
%If $\Plant \in \stable$, then $\Controller:=0 \in {\rm Stab}(\Plant)$ and (\ref{eq:fund}) yields (\ref{eqn:::Controller1}), where the $\beta$ defined in  (\ref{eqn:::Controller1}) is equal to $\beta \, \Plant \in \stable$ in (\ref{eq:fund}). Hence, we find again that Theorem~\ref{thm:::Solvability} yields Lemma~\ref{lem:::StablePlantSolvability}.
%}
%\end{remark}

\begin{corollary}\label{cor:::Solvability}
Let $\Controller=\frac{b}{a}$ be a stabilizing controller of $\Plant$  such that $a,b\in\stable$ satisfy \eqref{eqn:::ab}. The robust regulation problem is solvable if and only if there exist $\alpha, \, \beta, \, q_1, \, q_2\in\stable$ such that:
\begin{align}\label{eqn:::SolvabilityCondition}
\alpha \, \Generator^{-1}-\beta \, (b+q_1 \, a^2+q_2 \, b^2) \,  \Plant=1.
\end{align}
\end{corollary}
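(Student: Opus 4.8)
The plan is to deduce this corollary from Theorem~\ref{thm:::Solvability} by feeding it the parametrization \eqref{eqn:::AllStabilizingControllers} of all stabilizing controllers built around the fixed controller $\Controller=\frac{b}{a}$. The guiding observation is that $b':=b+q_1\,a^2+q_2\,b^2$ is exactly the numerator of $\Controller(q_1,q_2)$ and that, together with the denominator $a':=a+q_1\,\Plant\,a^2+q_2\,\Plant\,b^2$, it again satisfies \eqref{eqn:::ab}. Indeed, a direct cancellation of the $q_1,q_2$ terms gives $a'-\Plant\,b'=a-\Plant\,b=1$, and $\Plant\,a'=(\Plant\,a)+q_1\,(\Plant\,a)^2+q_2\,(\Plant\,b)^2\in\stable$ because $\Plant\,a\in\stable$ and $\Plant\,b=a-1\in\stable$. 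Consequently the left-hand side of \eqref{eqn:::SolvabilityCondition} is nothing but $\alpha\,\Generator^{-1}-\beta\,b'\,\Plant$, i.e.\ condition \eqref{eq:fund} of Theorem~\ref{thm:::Solvability} written for the pair $(a',b')$.

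For the necessity I would assume the robust regulation problem solvable. Theorem~\ref{thm:::Solvability} then yields a stabilizing controller $\frac{b'}{a'}$ with $(a',b')$ satisfying \eqref{eqn:::ab} and scalars $\alpha,\beta\in\stable$ with $\alpha\,\Generator^{-1}-\beta\,b'\,\Plant=1$. Since $\Controller=\frac{b}{a}$ is stabilizing with $(a,b)$ satisfying \eqref{eqn:::ab}, Theorem~\ref{thm:::Stability} shows that every stabilizing controller equals $\Controller(q_1,q_2)$ for some $q_1,q_2\in\stable$; in particular $\frac{b'}{a'}=\Controller(q_1,q_2)$. The explicit formulas $a'=(1-\Plant\,\Controller')^{-1}$ and $b'=\Controller'\,(1-\Plant\,\Controller')^{-1}$ in Theorem~\ref{thm:::Stability} make the representation of a stabilizing controller by a pair satisfying \eqref{eqn:::ab} unique, so the numerator must agree on the nose: $b'=b+q_1\,a^2+q_2\,b^2$. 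Substituting this into $\alpha\,\Generator^{-1}-\beta\,b'\,\Plant=1$ produces exactly \eqref{eqn:::SolvabilityCondition}.

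For the sufficiency I would start from $\alpha,\beta,q_1,q_2\in\stable$ satisfying \eqref{eqn:::SolvabilityCondition}, define $b'$ and $a'$ as above, and reuse the first paragraph to see that $(a',b')$ satisfies \eqref{eqn:::ab} and that \eqref{eqn:::SolvabilityCondition} coincides with \eqref{eq:fund}. Theorem~\ref{thm:::Solvability}, applied to the stabilizing controller $\frac{b'}{a'}$, then delivers a solution of the robust regulation problem.

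The step requiring care---and the one I expect to be the main obstacle---is precisely the well-definedness of this last controller: to invoke Theorem~\ref{thm:::Solvability} (equivalently, to stay inside the parametrization \eqref{eqn:::AllStabilizingControllers}) one needs the denominator $a'$ to be nonzero. Because \eqref{eqn:::ab} forces $a'=1+\Plant\,b'$, the only way this can fail is $\Plant\,b'=-1$, a degenerate situation. In the necessity direction no difficulty arises, since the $q_1,q_2$ come from a genuine controller and hence automatically give $a'\neq 0$; in the sufficiency direction I would either argue that this degenerate case is incompatible with \eqref{eqn:::SolvabilityCondition} or remove it by a small admissible adjustment of $q_1,q_2$, the remaining algebraic equivalence of the two conditions being routine.
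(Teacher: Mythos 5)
Your main argument coincides with the paper's own proof, which consists of the single sentence that the corollary ``follows from Theorem~\ref{thm:::Solvability} and the parametrization \eqref{eqn:::AllStabilizingControllers}'': you flesh this out correctly by verifying that the pair $(a',b')=(a+q_1\,\Plant\,a^2+q_2\,\Plant\,b^2,\;b+q_1\,a^2+q_2\,b^2)$ again satisfies \eqref{eqn:::ab} (using $\Plant\,b=a-1\in\stable$), by observing that $a'-\Plant\,b'=1$ forces $a'=(1-\Plant\,\Controller(q_1,q_2))^{-1}$ so that the pair attached to a stabilizing controller is unique, and by recognizing \eqref{eqn:::SolvabilityCondition} as condition \eqref{eq:fund} written for this pair. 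The necessity direction is complete as written.

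The one step you leave open is handled incorrectly in both of the repairs you sketch. First, the degenerate case $a'=0$ is \emph{not} incompatible with \eqref{eqn:::SolvabilityCondition}: if $a'=0$, then $b'\,\Plant=-1$, and the choice $\alpha=0$, $\beta=1$ satisfies \eqref{eqn:::SolvabilityCondition} on the nose; concretely, $\Plant=1$ with $(a,b)=(1,0)$ and $q_1=-1$, $q_2=0$ gives $a'=0$ while the identity holds. Second, ``a small admissible adjustment of $q_1,q_2$'' is unavailable here: $\stable$ is a bare integral domain and carries no topology in which to perturb. The degenerate branch must instead be closed algebraically, and it can be: $b'\,\Plant=-1$ forces $\Plant^{-1}=-b'\in\stable$, and then the robust regulation problem is solvable outright. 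Indeed, fix any fractional representation $\Generator=\frac{\gamma}{\generator}$ and set $\tilde{a}:=-\generator\,b'\neq 0$ and $\tilde{b}:=\Plant^{-1}\,(\tilde{a}-1)\in\stable$; then $\tilde{a}-\Plant\,\tilde{b}=1$ and $\Plant\,\tilde{a}=\generator\in\stable$, so $\tilde{\Controller}:=\frac{\tilde{b}}{\tilde{a}}$ stabilizes $\Plant$ by Theorem~\ref{thm:::Stability}, while $\Generator\,\tilde{a}=\gamma\,\Plant^{-1}\in\stable$ and $\Generator\,\Plant\,\tilde{a}=\gamma\in\stable$ show that it is regulating, hence robustly regulating by Lemma~\ref{lem:::IMP} and Theorem~\ref{thm:::IMP}; equivalently, $(\Generator\,\tilde{a})\,\Generator^{-1}-\tilde{b}\,\Plant=1$ puts you back inside Theorem~\ref{thm:::Solvability} with an admissible pair. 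Alternatively, if one adopts the paper's standing convention from Theorem~\ref{thm:::Stability} that $q_1,q_2$ are restricted so that the denominator of \eqref{eqn:::AllStabilizingControllers} does not vanish, your proof is already complete as it stands --- which is presumably why the paper dispatches the corollary in one sentence and never mentions the degenerate case at all.
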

\begin{proof}
The result follows from Theorem \ref{thm:::Solvability} and the parametrization \eqref{eqn:::AllStabilizingControllers} of stabilizing controllers. 
\end{proof}

\begin{remark}\label{rem:::RobustlyRegulatingController}
{\em 
If \eqref{eqn:::SolvabilityCondition} holds, then a stabilizing controller that satisfies the condition of Theorem~\ref{thm:::Solvability} is given by $\Controller=\frac{b+q_1 \, a^2+q_2 \, b^2}{a+q_1 \, \Plant \, a^2+q_2 \, \Plant \, b^2}$. The controller $\Controller_i$ in \eqref{eqn:::RobustlyRegulatingController} is to be designed so that it robustly regulates the stable plant 
%\begin{align*}
$(b+q_1 \,  a^2+q_2 \, b^2) \, \Plant.$
%\end{align*}
Following the proof of Lemma \ref{lem:::StablePlantSolvability}, one such controller is $\Controller_i=\frac{\beta}{\alpha}\, \Generator$.% It is obtained from (\ref{def:subparametrization}) by choosing $\widetilde{q}=\beta$.
}
\end{remark}

For the rest of the section, we consider a generator $\Generator$ which admits a weakly coprime factorization. The next theorem is a simplification of Theorem~\ref{thm:::Solvability} with such a generator.

\begin{theorem}\label{thm:::CoprimeGeneratorSolvability}
If $\Generator=\frac{\gamma}{\generator}$ is a weakly coprime factorization, then the robust regulation problem is solvable if and only if the plant $\Plant$ is stabilizable and if there exist $\alpha, \, \beta\in\stable$ such that $\alpha \, \generator-\beta \, \Plant=1$.
% , i.e. if we have $\Plant=\beta^{-1}(\alpha\generator-1)$.
\end{theorem}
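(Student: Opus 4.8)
The statement is an ``if and only if'', and I would prove the two implications by quite different means: the necessity leans on the weak-coprimeness machinery of Theorem~\ref{thm:::IMP_WeaklyCoprime}, whereas the sufficiency is an explicit construction that does not use weak coprimeness at all (only Theorem~\ref{thm:::Stability} and Corollary~\ref{cor:::IMP2}).

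For the \emph{necessity}, suppose $\Controller$ solves the robust regulation problem. Then $\Plant$ is stabilizable, and by Theorem~\ref{thm:::Stability} I may write $\Controller=\frac{b}{a}$ with $a-\Plant\,b=1$ and $\Plant\,a\in\stable$. Since $\Generator=\frac{\gamma}{\generator}$ is weakly coprime, Theorem~\ref{thm:::IMP_WeaklyCoprime} yields $\alpha,\beta\in\stable$ with $\generator\,(\alpha+\beta\,\Controller)=1$; clearing the denominator $a$ gives $a=\generator\,(\alpha\,a+\beta\,b)$, i.e.\ $\generator$ divides $a$ in $\stable$. Substituting this into $a-\Plant\,b=1$ produces $\generator\,(\alpha\,a+\beta\,b)-\Plant\,b=1$, which is the asserted identity with the stable coefficients $\alpha\,a+\beta\,b$ and $b$. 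Thus weak coprimeness is exactly what lets me convert the internal-model condition into a B\'ezout relation between $\generator$ and $\Plant$.

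For the \emph{sufficiency}, the plan is to build a robustly regulating controller by hand and invoke Corollary~\ref{cor:::IMP2}. Using stabilizability, fix $a_0,b_0\in\stable$ with $a_0-\Plant\,b_0=1$ and $\Plant\,a_0\in\stable$. The tempting first guess $\Controller=\frac{\beta}{\alpha\,\generator}$ does satisfy $a-\Plant\,b=1$ because of $\alpha\,\generator-\beta\,\Plant=1$, but it generally fails the second condition $\Plant\,a\in\stable$ of \eqref{eqn:::ab}, and repairing this is the crux of the argument. The fix is to multiply numerator and denominator by the stabilizing $a_0$, i.e.\ to set
\[
\Controller:=\frac{b_0+\beta\,a_0}{\alpha\,\generator\,a_0}
\]
(assuming $\alpha\neq0$, which costs nothing: if $\alpha=0$ one perturbs $(\alpha,\beta)$ as in the proof of Lemma~\ref{lem:::StablePlantSolvability}). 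Writing $a=\alpha\,\generator\,a_0$ and $b=b_0+\beta\,a_0$, the factor $a_0$ makes $\Plant\,a=\alpha\,\generator\,(\Plant\,a_0)\in\stable$ automatic, while $a-\Plant\,b=1$ follows by substituting $\Plant\,b_0=a_0-1$ and $\beta\,\Plant=\alpha\,\generator-1$; hence $\Controller\in{\rm Stab}(\Plant)$ by Theorem~\ref{thm:::Stability}.

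It then remains to verify the internal model. Since $a/\generator=\alpha\,a_0\in\stable$ and $\Plant\,a/\generator=\alpha\,(\Plant\,a_0)\in\stable$, multiplying $a-\Plant\,b=1$ by $\alpha\,a_0=a/\generator$ gives $(\alpha\,a_0)\,a-(\alpha\,\Plant\,a_0)\,b=\alpha\,a_0$, that is $\generator\,\bigl((\alpha\,a_0)+(-\alpha\,\Plant\,a_0)\,\Controller\bigr)=1$ with both coefficients in $\stable$, so Corollary~\ref{cor:::IMP2} shows that $\Controller$ solves the robust regulation problem. I expect the only genuine obstacle to be this sufficiency construction — more precisely, realising that $\generator$ must be forced into $a=(1-\Plant\,\Controller)^{-1}$ \emph{while} preserving $\Plant\,a\in\stable$, and that multiplying by the stabilizing $a_0$ is exactly what reconciles these two requirements. (One could instead try to regulate the stable plant $b_0\,\Plant=a_0-1$ via Lemma~\ref{lemma:product}, but that route would demand coprimeness of $\generator$ and $a_0-1$, which is not available for a fixed stabilizing controller; the direct construction sidesteps this.)
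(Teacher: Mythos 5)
Your proof is correct, and while the necessity half is essentially the paper's argument, the sufficiency half takes a genuinely more direct route. For necessity, both you and the paper invoke Theorem~\ref{thm:::IMP_WeaklyCoprime} to obtain $\generator\,(\alpha+\beta\,\Controller)=1$ and then eliminate $\Controller$ using the B\'ezout identity $a-\Plant\, b=1$ from \eqref{eqn:::ab}; your coefficients $(\alpha\, a+\beta\, b,\ b)$ are if anything cleaner than the paper's $(\alpha_0+\beta_0\, b,\ (1-\generator\,\alpha_0)\, b)$. For sufficiency, you actually construct \emph{the same controller} as the paper without realising it: since $1-\alpha\,\generator=-\beta\,\Plant$ and $a_0-1=\Plant\, b_0$, your numerator $b_0+\beta\, a_0$ equals the paper's $\beta+\alpha\,\generator\, b_0$, and the denominators $\alpha\,\generator\, a_0$ agree. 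The difference is in the certification: the paper reaches this controller through the Youla-type parametrization \eqref{eqn:::AllStabilizingControllers}, massaging $q:=\beta\, a$ into the form $q_1\, a^2+q_2\, b^2$ with $q_1=(1-\Plant\, b)\,\beta$, $q_2=(\beta\,\Plant)(a\,\Plant)$, and then concludes by verifying \eqref{eq:fund} and invoking Theorem~\ref{thm:::Solvability}; you instead verify \eqref{eqn:::ab} directly for the fractional representation (so stabilization follows from Theorem~\ref{thm:::Stability}) and exhibit the internal model $\generator\,\bigl((\alpha\, a_0)+(-\alpha\,\Plant\, a_0)\,\Controller\bigr)=1$, closing with Corollary~\ref{cor:::IMP2}. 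Your route bypasses Theorem~\ref{thm:::Solvability} and the parametrization machinery entirely, which makes it shorter and self-contained, and it produces the robustly regulating controller explicitly rather than concluding solvability abstractly; what the paper's detour buys is the exhibition of the regulator as a member of the stabilizing-controller parametrization, which is the thread picked up in Theorem~\ref{thm:::AllRobustlyRegulatingControllers} and Corollary~\ref{cor:::AllRobustlyRegulatingControllers}. One further point in your favour: you explicitly handle the degenerate case $\alpha=0$ (needed so the denominator $\alpha\,\generator\, a_0$ does not vanish; e.g.\ replace $(\alpha,\beta)$ by $(1,\ \beta\,(1-\generator))$, which works since $-\beta\,\Plant=1$), whereas the paper's substitution $q=\beta\, a$ silently presupposes $\alpha\neq 0$, and the paper's proof also contains the harmless typo $\beta\,\Plant=\alpha\,\generator+1$ where $\alpha\,\generator-1$ is meant. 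Your closing aside is also accurate: Theorem~\ref{thm:::Solvability} quantifies existentially over stabilizing controllers, so attempting Lemma~\ref{lemma:product} with a \emph{fixed} $b_0\,\Plant$ would indeed require a B\'ezout-type relation that need not hold, and switching to the tailored controller is exactly how the paper sidesteps this too.
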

\begin{proof}
We may assume that $\Plant$ is stabilizable. Let $\Controller$ be a stabilizing controller, i.e. there exist $a, \, b\in\stable$ such that $\Controller=\frac{b}{a}$ and \eqref{eqn:::ab} holds. 

In order to show the necessity, let us assume that $\Controller$ is robustly regulating. By Theorem~\ref{thm:::IMP_WeaklyCoprime}, there exist $\alpha_0, \, \beta_0\in\stable$ such that $\generator \, (\alpha_0+\beta_0 \, \Controller)=1$. By \eqref{eqn:::ab}, we have
\begin{align*}
1 & =\alpha_0 \, \generator+\generator \,  \beta_0 \, \Controller=\alpha_0\, \generator+\generator\, \beta_0\, \Controller\, (a-b\, \Plant) \\
& =(\alpha_0+\beta_0 \, b)\, \generator-(\beta_0\, \generator \, \Controller\, b) \, \Plant.
\end{align*}
Since $\alpha_0 + \beta_0 \, b\in \stable$ and $(\beta_0 \, \generator \, \Controller) \, b=(1-\generator \, \alpha_0) \, b \in \stable$, the necessity follows.

Let us now show the sufficiency.  Substituting
\begin{align*}
q & :=\beta \, a=\beta \, a \, (a-b \, \Plant)=\beta \, a^2-(\beta \, \Plant) \, a \, b \\
& =\beta \, a^2-(\beta \, \Plant) \, a \, b \, (a-b \, \Plant) \\
& = (1-p \, b) \, \beta \, a^2+(\beta \, \Plant) \, (a \, \Plant) \, b^2
\end{align*}
to (\ref{eqn:::AllStabilizingControllers}), where $\beta \, p=\alpha \, \theta+1 \in \stable$, and using the identities $\alpha \, \generator-\beta \, \Plant=1$ and $a-\Plant\, b=1$, we obtain the stabilizing controller
\begin{align*}
\Controller(\beta \, a) &=\Controller \, \frac{a \, (b+\beta \, a)}{b \, (a+\Plant  \, (\beta \, a))}=c\, \left(1+\frac{\beta \, a}{b \, (a+\Plant  \, (\beta \, a))} \right) \\ & =c \, \left( 1+\frac{\beta }{b \, (1+p \, \beta)} \right)=c \, \left( 1+\frac{\beta}{b \, \alpha \, \generator} \right)
\end{align*}
of $\Plant$ by Theorem~\ref{thm:::Stability}. Finally, we observe that the fractional representation $\Controller(\beta \, a)=\frac{\beta+\alpha\, \generator \, b}{\alpha\, \generator \, a}$ satisfies
\begin{align*}
\left\{\begin{array}{l}
\alpha\, \generator \, a-(\beta+\alpha\, \generator \, b)\;p=\alpha\, \generator \, (a-b\Plant) - \beta\,\Plant =1,\vspace{1mm}\\
\alpha\, \generator \, a\, \Plant\in\stable,
\end{array}\right.
\end{align*}
i.e. it satisfies (\ref{eqn:::ab}), and
\begin{align*}
(\alpha \, a\, \gamma) \, \Generator^{-1}-(\beta+\alpha\, \generator \, b)\, \Plant=1,
\end{align*}
so the claim follows by Theorem~\ref{thm:::Solvability}.
%Let us assume that $\alpha\, \generator-\beta\, \Plant=1$ and that there exist $a,\, b\in\stable$ such that \eqref{eqn:::ab} holds and $a\neq 0$. Now, we have:
%\begin{align*}
%1=\alpha\, \generator-\beta\, \Plant=\alpha\, \generator\, (a-b\, \Plant)-\beta\, \Plant=\alpha \, a\, \generator-(\beta+\alpha\, \generator \, b)\, \Plant.
%\end{align*}
%Without restricting generality, we can assume that $\alpha\neq 0$. Since $\alpha\, \generator\,  a \, \Plant\in\stable$, $\Controller:=\frac{\beta+\alpha\, \generator \, b}{\alpha\, \generator \, a}$ is a stabilizing controller of $\Plant$ by Theorem~\ref{thm:::Stability}. The claim follows by Theorem~\ref{thm:::Solvability} and the above equation, i.e.:
%$$(\alpha \, a\, \gamma) \, \Generator^{-1}-(\beta+\alpha\, \generator \, b)\, \Plant=1.$$
\end{proof}

By using Theorem~\ref{thm:::CoprimeGeneratorSolvability}, we are able to state the second main result of this section: a parametrization of all the robustly regulating controllers. The next theorem leading to parametrization of all robustly regulating controllers was given in \cite{Vidyasagar} for finite-dimensional systems. The actual parametrization will be given by Corollary~\ref{cor:::AllRobustlyRegulatingControllers}.

\begin{theorem}\label{thm:::AllRobustlyRegulatingControllers}
Assume that $\Generator=\frac{\gamma}{\generator}$ is a weakly coprime factorization. 
%The robust regulation problem is solvable if and only if $\Plant_0=\frac{\Plant}{\generator}$ is stabilizable.
If the robust regulation problem is solvable,  then a controller $\Controller$ is robustly regulating if and only if it is of the form $\Controller=\frac{\Controller_0}{\generator}$, where $\Controller_0$ is a stabilizing controller of $\Plant_0:=\frac{\Plant}{\generator}$.
\end{theorem}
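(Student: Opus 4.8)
The plan is to exploit the single algebraic identity $\Plant_0 \, \Controller_0 = \Plant \, \Controller$ that holds whenever $\Controller_0 = \generator \, \Controller$, since $\Plant_0 = \frac{\Plant}{\generator}$. This gives $1 - \Plant_0 \, \Controller_0 = 1 - \Plant \, \Controller$, so the two closed loops $\Closedloop{\Plant,\Controller}$ and $\Closedloop{\Plant_0,\Controller_0}$ share the same diagonal entry $(1-\Plant\,\Controller)^{-1}$ and differ only by a factor $\generator$ in their off-diagonal entries. Thus the whole equivalence reduces to transporting stability of individual entries across the factor $\generator$, and the two available hypotheses, weak coprimeness of $\Generator=\frac{\gamma}{\generator}$ and solvability of the problem, are exactly what legitimize these transports.

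For the necessity, I would assume $\Controller$ robustly regulating and set $\Controller_0 := \generator \, \Controller$, then verify $\Controller_0 \in {\rm Stab}(\Plant_0)$ directly from the closed-loop definition. Since $\Controller$ stabilizes $\Plant$, Theorem~\ref{thm:::Stability} provides $a := (1-\Plant\,\Controller)^{-1}\in\stable$ and $b := \Controller\,(1-\Plant\,\Controller)^{-1}\in\stable$; moreover, running the weak-coprimeness argument from the proof of Theorem~\ref{thm:::IMP_WeaklyCoprime} (regulation forces $\frac{\gamma\,a}{\generator}\in\stable$ and $a\in\stable$, whence $\frac{a}{\generator}\in\stable$ by weak coprimeness, and likewise $\frac{a\,\Plant}{\generator}\in\stable$). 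The four entries of $\Closedloop{\Plant_0,\Controller_0}$ are then $a$, $\Plant_0\,a = \frac{\Plant\,a}{\generator}$, $\Controller_0\,a = \generator\,b$, and $a$, all in $\stable$; hence $\Controller_0\in{\rm Stab}(\Plant_0)$ and $\Controller=\frac{\Controller_0}{\generator}$ has the claimed form.

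For the sufficiency, suppose $\Controller=\frac{\Controller_0}{\generator}$ with $\Controller_0\in{\rm Stab}(\Plant_0)$. Theorem~\ref{thm:::Stability} supplies $a_0,b_0\in\stable$ with $a_0 - \Plant_0\,b_0 = 1$ and $\Plant_0\,a_0\in\stable$, where $a_0 = (1-\Plant_0\,\Controller_0)^{-1} = (1-\Plant\,\Controller)^{-1}$ and $b_0 = \Controller_0\,a_0$. I would first show $\Controller$ stabilizes $\Plant$ by checking that the entries $\frac{\Plant}{1-\Plant\,\Controller} = \generator\,\Plant_0\,a_0$ and $\frac{\Controller}{1-\Plant\,\Controller} = \frac{b_0}{\generator}$ are stable. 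The first is immediate from $\Plant_0\,a_0\in\stable$; the second is where solvability enters: by Theorem~\ref{thm:::CoprimeGeneratorSolvability} there are $\alpha,\beta\in\stable$ with $\alpha\,\generator-\beta\,\Plant=1$, and multiplying by $\frac{b_0}{\generator}$ gives $\frac{b_0}{\generator} = \alpha\,b_0 - \beta\,\frac{\Plant\,b_0}{\generator}$, where $\frac{\Plant\,b_0}{\generator} = \Plant_0\,b_0 = a_0-1\in\stable$, so $\frac{b_0}{\generator}\in\stable$. Setting $a:=a_0$ and $b:=\frac{b_0}{\generator}$, the pair satisfies \eqref{eqn:::ab} for $(\Plant,\Controller)$, so $\Controller\in{\rm Stab}(\Plant)$. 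Finally, the same coprimeness relation gives $\frac{a_0}{\generator} = \alpha\,a_0 - \beta\,\Plant_0\,a_0\in\stable$ and $\frac{a_0\,\Plant}{\generator} = \Plant_0\,a_0\in\stable$; dividing \eqref{eqn:::ab} by $\generator$ exactly as in the proof of Theorem~\ref{thm:::IMP_WeaklyCoprime} then yields $\frac{1}{\generator} = \frac{a}{\generator} - \frac{a\,\Plant}{\generator}\,\Controller$, an internal model $\generator\,(\alpha'+\beta'\,\Controller)=1$, and Theorem~\ref{thm:::IMP_WeaklyCoprime} certifies that $\Controller$ is robustly regulating.

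The main obstacle is precisely the stability of $\frac{b_0}{\generator}$ (and, for the internal model, of $\frac{a_0}{\generator}$) in the sufficiency direction: stabilization of $\Plant_0$ by $\Controller_0$ by itself does not permit dividing $b_0$ or $a_0$ by $\generator$, and it is the solvability hypothesis, entering through the coprimeness relation $\alpha\,\generator-\beta\,\Plant=1$, that bridges this gap. I would flag this as the step that makes essential use of the standing assumption that the problem is solvable; without it the stated parametrization would fail.
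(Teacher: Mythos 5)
Your proof is correct and follows essentially the same route as the paper: both directions rest on the identity $1-\Plant_0\,\Controller_0=1-\Plant\,\Controller$, the relation $\alpha\,\generator-\beta\,\Plant=1$ from Theorem~\ref{thm:::CoprimeGeneratorSolvability} to show $\frac{b_0}{\generator},\frac{a_0}{\generator}\in\stable$ in the sufficiency, and the weak-coprimeness argument of Theorem~\ref{thm:::IMP_WeaklyCoprime} to obtain $\Plant_0\,a\in\stable$ in the necessity. The only cosmetic differences are that you inline the weak-coprimeness argument where the paper cites Theorem~\ref{thm:::IMP_WeaklyCoprime}, and you close the sufficiency via the internal-model identity $\generator\,(\alpha'+\beta'\,\Controller)=1$ rather than verifying the regulation conditions entrywise as the paper does; your flagged observation about where solvability is essential matches the paper's use of it in \eqref{eqn:::Regulation_help2} and \eqref{eqn:::Stability_help3}.
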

\begin{proof}
Assume that the robust regulation problem is solvable. We first show that if $\Controller_0$ stabilizes $\Plant_0$, then $\Controller$ is robustly regulating. Since we assume that $\Controller_0$ stabilizes $\Plant_0$, Theorem~\ref{thm:::Stability} implies that there exist stable elements $0\neq a,b\in\stable$ satisfying
\begin{align}\label{eqn:::ab2}
\left\{ \begin{array}{l}
a-\Plant_0 \,b=1, \vspace{1mm}\\
\Plant_0 \, a\in\stable,
\end{array}\right.
\end{align}
and $\Controller_0=\frac{b}{a}$. By \eqref{eqn:::ab2}, we see that:
\begin{align}\label{eqn:::Stability_help1}
\dfrac{1}{1-\Plant \, \Controller} =\dfrac{1}{1-\Plant_0 \, \Controller_0}=a\in\stable.
\end{align}
By the assumption that $\Controller_0$ stabilizes $\Plant_0$,
\begin{align}\label{eqn:::Stability_help2}
\dfrac{\Plant}{1-\Plant \, \Controller}  =
\dfrac{\generator \, \Plant_0}{1-\Plant_0 \, \Controller_0}\in\stable, \\
\label{eqn:::Regulation_help1}
\dfrac{1}{\generator}\dfrac{\Plant}{1-\Plant \, \Controller} =
\dfrac{\Plant_0}{1-\Plant_0\, \Controller_0}\in\stable.
\end{align}
Since the robust regulation problem is solvable, Theorem~\ref{thm:::CoprimeGeneratorSolvability} implies that there exist $\alpha,\beta\in\stable$ such that $\alpha \, \generator-\beta \,\Plant=1$. By \eqref{eqn:::ab2}, we have
\begin{align}\label{eqn:::Regulation_help2}
\dfrac{1}{\generator}\dfrac{1}{1-\Plant \, \Controller}=\dfrac{a}{\generator} \, (\alpha \, \generator-\beta \,\Plant)=a \, \alpha -(a \,\Plant_0) \, \beta \in\stable,\\
\label{eqn:::Stability_help3}
\dfrac{\Controller}{1-\Plant \, \Controller} =\dfrac{b}{\generator}=\dfrac{b}{\generator} \, (\alpha \, \generator-\beta \, \Plant)=b \, \alpha -(b \,\Plant_0) \, \beta\in\stable.
\end{align}
The controller $\Controller$ is stabilizing by \eqref{eqn:::Stability_help1}, \eqref{eqn:::Stability_help2} and \eqref{eqn:::Stability_help3}. It is regulating by \eqref{eqn:::Regulation_help1} and \eqref{eqn:::Regulation_help2}. The controller is robustly regulating since regulation implies robust regulation in the SISO case.

Next, we show that a robustly regulating controller has the form $\Controller=\frac{\Controller_0}{\generator}$ where $\Controller_0$ stabilizes $\Plant_0$. By Theorem~\ref{thm:::Stability}, $\Controller=\frac{b}{a}$, where $0\neq a\in\stable$ and $b\in\stable$, satisfy \eqref{eqn:::ab}. Since $\Controller$ is regulating for the signal generator $\generator^{-1}$ by Theorem~\ref{thm:::IMP_WeaklyCoprime} and \eqref{eqn:::ab} holds, we have:
\begin{align*}
\left\{ \begin{array}{l}
a-(\generator \, b) \, \Plant_0=a-\Plant \, b=1, \vspace{1mm}\\
\Plant_0 \, a=\dfrac{1}{\generator} \, \dfrac{\Plant}{1-\Plant \, \Controller}\in\stable.
\end{array}\right.
\end{align*}
This completes the proof since $\Controller_0=\generator \, \Controller$ stabilizes $\Plant_0$ by Theorem~\ref{thm:::Stability}.
\end{proof}

\begin{corollary}\label{cor:::AllRobustlyRegulatingControllers}
Let $\Controller$ be a robustly regulating controller. If $\Generator=\frac{\gamma}{\generator}$ is a weakly coprime factorization, then
all robustly regulating controllers are given by
\begin{align}\label{eqn:::AllRobustlyRegulatingControllers}
\Controller(q_1,q_2)=\dfrac{b+q_1 \, a^2+q_2 \,b^2}{\generator \, a+q_1 \,a^2 \, \Plant+q_2 \, b^2 \, \Plant},
\end{align}
where $a:=(1-\Plant \, \Controller)^{-1}$, $b:=\generator \, \Controller\,(1-\Plant \, \Controller)^{-1}$, and $q_1, \, q_2 \in A$ are arbitrary elements such that: 
\begin{align*}
\generator \,a+q_1 \, a^2\, \Plant+q_2 \,b^2 \, \Plant\neq 0.
\end{align*} 
\end{corollary}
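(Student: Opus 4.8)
The plan is to reduce everything to the structural characterization already supplied by Theorem~\ref{thm:::AllRobustlyRegulatingControllers}, combined with the Youla-type parametrization of Theorem~\ref{thm:::Stability}. Since a robustly regulating controller $\Controller$ is assumed to exist, the robust regulation problem is solvable, so Theorem~\ref{thm:::AllRobustlyRegulatingControllers} applies verbatim: a controller is robustly regulating if and only if it equals $\frac{\Controller_0}{\generator}$ for some $\Controller_0 \in {\rm Stab}(\Plant_0)$, where $\Plant_0 := \frac{\Plant}{\generator}$. Thus it suffices to parametrize ${\rm Stab}(\Plant_0)$ and then divide by $\generator$.

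First I would fix a base point for that parametrization coming from the given $\Controller$. Setting $\Controller_0 := \generator \, \Controller$, the identity $1 - \Plant_0 \, \Controller_0 = 1 - \frac{\Plant}{\generator} \, \generator \, \Controller = 1 - \Plant \, \Controller$ shows at once that
\begin{align*}
(1 - \Plant_0 \, \Controller_0)^{-1} = (1 - \Plant \, \Controller)^{-1} = a, \qquad \Controller_0 \, (1 - \Plant_0 \, \Controller_0)^{-1} = \generator \, \Controller \, (1 - \Plant \, \Controller)^{-1} = b.
\end{align*}
Hence the $a$ and $b$ of the statement are exactly the pair produced by Theorem~\ref{thm:::Stability} for the plant $\Plant_0$ and the stabilizing controller $\Controller_0 = \frac{b}{a}$; in particular $a - \Plant_0 \, b = 1$ and $\Plant_0 \, a \in \stable$.

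Next I would invoke the parametrization \eqref{eqn:::AllStabilizingControllers} of Theorem~\ref{thm:::Stability} for $\Plant_0$ with this base point, giving every $\Controller_0 \in {\rm Stab}(\Plant_0)$ as
\begin{align*}
\Controller_0(q_1,q_2) = \frac{b + q_1 \, a^2 + q_2 \, b^2}{a + q_1 \, \Plant_0 \, a^2 + q_2 \, \Plant_0 \, b^2}, \qquad q_1, \, q_2 \in \stable.
\end{align*}
Dividing by $\generator$ and clearing the factor $\generator$ hidden in $\Plant_0 = \frac{\Plant}{\generator}$ — that is, multiplying numerator and denominator of $\frac{\Controller_0(q_1,q_2)}{\generator}$ by $\generator$ — turns this expression into exactly \eqref{eqn:::AllRobustlyRegulatingControllers}. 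The non-vanishing constraint transfers directly because $\generator \neq 0$, so the denominator $\generator \, a + q_1 \, a^2 \, \Plant + q_2 \, b^2 \, \Plant$ vanishes precisely when the denominator of $\Controller_0(q_1,q_2)$ does.

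The argument is essentially bookkeeping, and the only place where care is genuinely needed is the bookkeeping itself: getting the key identity $1 - \Plant_0 \, \Controller_0 = 1 - \Plant \, \Controller$ right so that the intermediate quantities $(1 - \Plant_0 \, \Controller_0)^{-1}$ and $\Controller_0 \, (1 - \Plant_0 \, \Controller_0)^{-1}$ collapse to the stated $a$ and $b$, and then tracking the single factor of $\generator$ correctly when passing from $\frac{\Controller_0(q_1,q_2)}{\generator}$ to the displayed formula. I expect no substantive obstacle beyond confirming that the hypothesis — existence of a robustly regulating $\Controller$ — indeed puts Theorem~\ref{thm:::AllRobustlyRegulatingControllers} in force.
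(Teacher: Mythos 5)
Your proposal is correct and follows essentially the same route as the paper's own proof: both invoke Theorem~\ref{thm:::AllRobustlyRegulatingControllers} to reduce the problem to parametrizing ${\rm Stab}(\Plant_0)$ for $\Plant_0=\frac{\Plant}{\generator}$, identify the base pair $a=(1-\Plant_0\,\Controller_0)^{-1}$ and $b=\Controller_0\,(1-\Plant_0\,\Controller_0)^{-1}$ coming from $\Controller_0=\generator\,\Controller$ so that Theorem~\ref{thm:::Stability} yields the parametrization \eqref{eqn:::AllStabilizingControllers} of ${\rm Stab}(\Plant_0)$, and then multiply by $\generator^{-1}$, using $\generator\,\Plant_0=\Plant$ to obtain \eqref{eqn:::AllRobustlyRegulatingControllers}. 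Your handling of the non-vanishing condition (it transfers since $\generator\neq 0$ in an integral domain) is also consistent with the paper.
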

\begin{proof}
Consider the notations of Theorem \ref{thm:::AllRobustlyRegulatingControllers}. Now $a=(1-\Plant_0 \, \Controller_0)^{-1}$ and $b=\Controller_0 \, (1-\Plant_0 \, \Controller_0)^{-1}$ satisfy
\begin{align*}
\left\{ \begin{array}{l}
a-\Plant_0 \, b=1, \vspace{1mm}\\
\Plant_0 \, a = \dfrac{1}{\generator} \, \dfrac{\Plant}{1-\Plant \, \Controller}\in\stable,
\end{array}\right.
\end{align*}
so Theorem~\ref{thm:::Stability} shows that all the stabilizing controllers of $\Plant_0$ are of the form:
\begin{align*}
\Controller_0(q_1,q_2)=\dfrac{b+q_1\, a^2+q_2 \, b^2}{a+q_1\, a^2 \, \Plant_0+q_2 \, b^2 \, \Plant_0}.
\end{align*}
Theorem~\ref{thm:::CoprimeGeneratorSolvability} shows that we obtain the desired parametrization by multiplying the above parametrization by $\generator^{-1}$.
\end{proof}

%% <== End of solvability considerations
%%%%%%%%%%%%%%%%%%%%%%%%%%%%%%%%%%%%%%%%%%%%%%%%%%%%%%%%%%%%%

%%%%%%%%%%%%%%%%%%%%%%%%%%%%%%%%%%%%%%%%%%%%%%%%%%%%%%%%%%%%%
%% ==>Parameterization:
%\section{Parameterization of All Robustly Regulating Controllers}\label{sec:::Parameterization}

%% <== End of parameterization considerations
%%%%%%%%%%%%%%%%%%%%%%%%%%%%%%%%%%%%%%%%%%%%%%%%%%%%%%%%%%%%%

%%%%%%%%%%%%%%%%%%%%%%%%%%%%%%%%%%%%%%%%%%%%%%%%%%%%%%%%%%%%%
%% ==> Example:
\section{Examples}\label{sec:::Examples}

In the first example, the plant does not possess a weakly coprime factorization. The second example shows that the results presented here extend the classical ones obtained in $\Hinfty$-framework. We will see that the signal generator need not possess a coprime factorization in order for the robust regulation problem to be solvable.

\begin{example}\label{exa:::Example1}
Recall \cite[Example 3.2]{Mori1999}, where $\stable:=\R[x^2,x^3]$ served as a discrete finite-time model of some high speed electronic circuits without unit delays. It was shown in \cite[Example 4]{Quadrat2003} that $\Plant:=\frac{x^3-1}{x^2-1}\in\fractions{\stable}$ does not admit a weakly coprime factorization over $\stable$ and that $\Controller:=\frac{x^2-1}{x^3+1}$ is a stabilizing controller. In addition, a fractional representation $\Controller=\frac{a}{b}$ that satisfies \eqref{eqn:::ab} is given by $a:=\frac{x^3+1}{2}$ and $b:=\frac{x^2-1}{2}$.

Let us consider robust regulation with the generator $\Generator:=\frac{1}{x^5-x^2+2}\in\fractions{\stable}$. If we choose $q_1=q_2=0$, $\alpha=\frac{1}{2}$, and $\beta=x^2$, then \eqref{eqn:::SolvabilityCondition} holds. The robust regulation problem is solvable by Corollary~\ref{cor:::Solvability}.

Let us now construct a robustly regulating controller. By Remark \ref{rem:::RobustlyRegulatingController}, a robustly regulating controller is given by \eqref{eqn:::RobustlyRegulatingController} if we can find a robustly regulating controller $\Controller_i$ for $b\, \Plant$. Following the proof of Lemma \ref{lem:::StablePlantSolvability} we find out that $\Controller_i=\frac{\beta}{\alpha\, \Generator^{-1}}$ robustly regulates $b\, \Plant$. The desired controller is:
\begin{align*}
\Controller_r=\Controller\,(1+\Controller_i)=\dfrac{(x^2-1)(x^5+x^2+2)}{(x^3+1)(x^5-x^2+2)}.
\end{align*}

We end this example by parameterizing all robustly regulating controllers. By Theorem \ref{thm:::AllRobustlyRegulatingControllers}, 
\begin{align*}
\Controller_0=(x^5-x^2+2) \, \Controller_r=\frac{(x^2-1)(x^5+x^2+2)}{x^3+1}
\end{align*}
stabilizes $\Plant_0=\frac{\Plant}{x^5-x^2+2}$. A fractional representation of $\Controller_0=\frac{a_0}{b_0}$ that satisfies
\begin{align*}
\left\{ \begin{array}{l}
a_0-\Plant_0 \, b_0=1, \vspace{1mm}\\
\Plant_0 \, a_0\in\stable,
\end{array}\right.
\end{align*}
is given by:
\begin{align*}
\left\{
\begin{array}{l}
a_0=\dfrac{1}{1-\Plant_0 \, \Controller_0}=\frac{x^3+1}{2}\in\stable, \vspace{1mm}\\
b_0=a_0 \, \Controller_0=\frac{x^2-1}{2}\in\stable.
\end{array}
\right.
\end{align*}
By Corollary \ref{cor:::AllRobustlyRegulatingControllers}, all the robustly regulating controllers of $\Plant$ are then given by:
\begin{align*}
\Controller(q_1,q_2)=\frac{b_0+q_1 \, a_0^2+q_2 \, b_0^2}{(x^5-x^2+2) \, a_0+q_1 \,  \Plant \, a_0^2+q_2 \,  \Plant \, b_0^2}.
\end{align*}\QEDexample
\end{example}

\begin{example}\label{exa:::WeaklyCoprimeGenerator}
Choose $\Plant:=1\in \Hinfty(\C_+)$ and let $\Generator\in \fractions{\Hinfty(\C_+)}$ be such that it does not possess a coprime factorization, e.g., see \cite{Logemann1987}. Any stabilizable plant in $\fractions{\Hinfty(\C_+)}$ possesses a coprime factorization \cite{Smith1989}, so $\Generator\,\Plant$ is not stabilizable. %Thus, we cannot add the whole generator to the plant and stabilize it in order to find a robustly regulating controller. 
However, since $1=0\, \Generator^{-1}+\Plant$, there exists a robustly regulating controller by Lemma~ \ref{lem:::StablePlantSolvability}.

Let $\Generator=\frac{\gamma}{\generator}$ be an arbitrary fractional representation. We choose $\Controller=\frac{\generator-1}{\generator}$. It is easy to see that \eqref{eqn:::ab} holds with $a:=\generator$ and $b:=\generator-1$. The controller is stabilizing by Theorem~\ref{thm:::Stability}, and admits a coprime factorization $\Controller=\frac{\generator-1}{\generator}$. The controller is robustly regulating by Theorem~\ref{thm:Pcoprime}.
%Corollary~\ref{thm:::InternalModelsStable}. 
This shows that $\generator^{-1}$ is the internal model built into the controller. 

Above we have found a controller that solves the robust regulation problem. We know that $\Generator$ possess a weakly coprime factorization \cite{Smith1989}. Using it and Corollary \ref{cor:::AllRobustlyRegulatingControllers}, we can easily parametrize all the robustly regulating controllers.
\QEDexample
\end{example}

%% <== End of example
%%%%%%%%%%%%%%%%%%%%%%%%%%%%%%%%%%%%%%%%%%%%%%%%%%%%%%%%%%%%%

%%%%%%%%%%%%%%%%%%%%%%%%%%%%%%%%%%%%%%%%%%%%%%%%%%%%%%%%%%%%%
%% ==> Conclusions:
\section{Concluding Remarks}\label{sec:::Conclusions}

In this article, we have developed a frequency domain theory of robust regulation that uses no coprime factorizations for SISO systems. We were able to formulate the internal model principle and to give necessary and sufficient solvability conditions in a very general algebraic framework. In addition, a parametrization of all robustly regulating controllers was given provided that the signal generator possesses a weakly coprime factorization, but not necessarily a coprime factorization. Thus, the results of this article extend the classical ones using coprime factorization. If $\stable=\Hinfty$, this article fully characterizes the solvability and parametrizes all the robustly regulating controllers since any plant in $\fractions{\Hinfty}$ has a weak coprime factorization \cite{Smith1989}.

The advantage of the adopted %algebraic 
approach is that the results of this paper extend the class of systems we can deal with, and gives a new formulation for some classical results using only general fractional representations. From the practical point of view, the usefulness of the results is a consequence of the difficulty to find coprime factorizations of the transfer functions of infinite-dimensional systems. Future work contains generalization of the results to the multi-input multi-output case.

\bibliographystyle{plain} %Style of Bibliography: plain / apalike / amsalpha 

%%%%%%%%%%%%%%%%%%%%%%%%%%%%%%%%%%%%%%%%%%%%%%%%%%%%%%%%%%%%%
%% APPENDICES
%%%%%%%%%%%%%%%%%%%%%%%%%%%%%%%%%%%%%%%%%%%%%%%%%%%%%%%%%%%%%
%\begin{appendices}
%\input{Appendix}
%% ==> Write your text here or include other files.

%\input{FileName} %You need a file 'FileName.tex' for this.
%\end{appendices}

\end{document}